\documentclass{IEEEtran}
\usepackage{amsmath,amssymb,amsfonts}
\usepackage{amsthm}
\usepackage{graphicx}
\usepackage{hyperref}
\usepackage{physics}
\usepackage{bm}
\usepackage{algorithm}
\usepackage{algpseudocodex}
\usepackage{siunitx}
\usepackage{booktabs}
\usepackage[font=footnotesize]{caption}
\usepackage{subcaption}
\usepackage{mathtools}
\usepackage{url}
\def\BibTeX{{\rm B\kern-.05em{\sc i\kern-.025em b}\kern-.08em
    T\kern-.1667em\lower.7ex\hbox{E}\kern-.125emX}}
\newcommand{\R}{\mathbb{R}}
\newcommand{\N}{\mathbb{N}}

\newcommand{\E}{\mathbb{E}}

\newcommand{\cov}[1]{\mathrm{Cov}[#1 ]}
\let\P\relax
\newcommand{\P}[1]{\mathbb{P}\left[#1\right]}
\let\E\relax
\newcommand{\E}[1]{\mathbb{E}\left[#1\right]}

\newcommand{\suchthat}{\mathrm{s.t.}\quad}

\newcommand{\diag}[1]{\mathrm{diag}(#1)}
\DeclareMathOperator*{\minimize}{minimize}

\newtheorem{theorem}{Theorem}
\newtheorem{assumption}{Assumption}

\newtheorem{remark}{Remark}
\newtheorem{lemma}{Lemma}
\newtheorem{proposition}{Proposition}

\usepackage{cleveref}
\crefname{align}{}{}
\crefname{equation}{}{}
\crefname{figure}{Fig.}{Figs.}
\crefname{table}{Table}{Tables}
\crefname{theorem}{Theorem}{Theorems}
\crefname{definition}{Definition}{Definitions}
\crefname{lemma}{Lemma}{Lemmas}
\crefname{remark}{Remark}{Remarks}
\crefname{assumption}{Assumption}{Assumptions}
\crefname{proof}{Proof}{Proofs}
\crefname{algorithm}{Algorithm}{Algorithms}
\crefname{problem}{Problem}{Problems}
\crefname{proposition}{Proposition}{Propositions}
\crefname{corollary}{Corollary}{Corollaries}
\crefname{section}{Section}{Sections}

\allowdisplaybreaks[1] 

\def\showChanges{0} 

\newcommand{\highlight}[1]{%
    \ifnum\showChanges=1
        \textcolor{red}{#1}%
    \else
        #1%
    \fi
}
\newcommand{\mathhl}[1]{
    \ifnum\showChanges=1
        \mathcolor{red}{#1}
    \else
        #1
    \fi
}

\usepackage{marginnote}
\usepackage{tcolorbox}
\newcommand{\margincomment}[1]{%
    \ifnum\showChanges=1
    \textcolor{blue}{\setlength{\baselineskip}{10pt}\footnotesize\raggedright\marginnote{#1}}%
    \else
    \fi
}

\graphicspath{{./figures/}}

\title{Square Root-Factorized Covariance Steering}
\author{Naoya Kumagai, \IEEEmembership{Graduate Student Member, IEEE}, Kenshiro Oguri, \IEEEmembership{Member, IEEE}
\thanks{This material is based upon work supported by the Air Force Office of Scientific Research under award number FA9550-23-10512. N. K. acknowledges support for his graduate studies from the Shigeta Education Fund. The authors are with the School of Aeronautics and Astronautics, Purdue University, West Lafayette, IN 47907 USA (\{nkumagai, koguri\}@purdue.edu).}}

\begin{document}

\maketitle

\begin{abstract}
Covariance steering (CS) synthesizes a control policy which drives the state's mean and covariance matrix towards desired values. 
Offering tractable computation of a closed-loop policy which can obey chance constraints in uncertain environments, application to many real-world control problems have been proposed.
We consider the chance-constrained, discrete-time, linear time-varying CS with Gaussian noise. The contribution of this paper is a novel solution method for this problem, explicitly writing the propagation equations of the Cholesky factor of the state covariance matrix by using the QR decomposition. 
The use of the square-root form of covariance matrices brings two key benefits over other existing methods: (i) computational scalability and (ii) numerical reliability.  (i) Compared to solution methods that require large block matrix formulations, the proposed method scales better with the growth in horizon length, shows better optimality, and uses memoryless state feedback. 
(ii) Compared to another class of methods that explicitly define the covariance matrix as variables, the proposed method allows flexible cost formulations and shows better numerical reliability when uncertainty terms are smaller than the mean. 
On the other hand, these benefits come with a minor drawback: the propagation equation of covariance square roots is non-convex, necessitating sequential convex programming to solve.  However, this paper proves the global optimality of the proposed approach for CS without chance constraints. When chance constraints are present, the existing optimal CS formulation is also non-convex, and we prove that the proposed approach shares the same local minima. We verify the mathematical arguments via extensive numerical simulations.
\end{abstract}

\begin{IEEEkeywords}
	Stochastic optimal control, semidefinite programming, sequential convex programming.
\end{IEEEkeywords}

\section{Introduction}

Uncertainty is the fundamental reason for closed-loop control design. To explicitly and probabilistically address uncertainty, covariance steering (CS) \cite{hotzCovarianceControlTheory1987,chenOptimalSteeringLinear2016} has emerged as a promising control framework. Unlike traditional methods that treat uncertainty as a disturbance to be rejected, CS defines the system state to include both the mean and the covariance matrix within the optimal control formulation. This approach yields optimal feedforward and feedback control policies that steer the system's statistical moments to, or within, desired terminal values.

However, CS possesses unique characteristics distinct from standard optimal control problems. Due to the inherent objective of steering uncertainty to small values while minimizing control effort, the variance of the state can show significantly different magnitudes throughout the time horizon. This contrasts with nominal trajectory optimization, where the magnitude of the state variables typically remains within a consistent range.
This issue is amplified in real-world applications by two factors. First, the magnitude of the uncertainty is often much smaller than that of the mean state; for example, the mean trajectory of a spacecraft can span tens of thousands of kilometers, while its position uncertainty can be on the order of kilometers \cite{kumagaiRobustCislunarLowThrust2025}. Second, safety requirements and actuator limitations necessitate the use of chance constraints (CCs). CCs couple the problem of finding the optimal feedforward sequence with that of finding the optimal feedback sequence  \cite{okamotoOptimalCovarianceControl2018}. Consequently, the optimization problem is inherently ill-scaled. Implemented on finite-precision computers, numerical optimization algorithms may struggle to converge, as we demonstrate in this paper.

Recent works on finite-horizon discrete-time CS mainly focus on converting the problem into a convex optimization problem. Skaf and Boyd \cite{skafDesignAffineControllers2010} consider an affine \textit{causal} state feedback controller and formulates the problem as a semidefinite programming (SDP) problem. Okamoto et al. \cite{okamotoOptimalCovarianceControl2018} add affine CCs while maintaining convexity. However, these methods suffer from the rapidly growing computational time as horizon length increases, due to the large linear matrix inequality (LMI) constraints involved \cite{rapakouliasDiscreteTimeOptimalCovariance2023}; see \cref{tab:comparison}. Okamoto and Tsiotras \cite{okamotoOptimalStochasticVehicle2019} remedy the need for block lower triangular (LT) matrix variables employed in \cite{skafDesignAffineControllers2010,okamotoOptimalCovarianceControl2018} by introducing an alternative affine feedback control, requiring only a block diagonal structure. Nevertheless, due to the implicit representation of the covariance matrix variables as a function of all the previous system matrices, the LMIs to be solved remain rather large. For later discussion, it is also important to note that this implicit representation in \cite{skafDesignAffineControllers2010,okamotoOptimalCovarianceControl2018,okamotoOptimalStochasticVehicle2019} allows an affine representation of the Cholesky factor of the covariance matrix in terms of the optimization variables. This in turn allows affine CCs to be expressed exactly as second-order cone constraints. (See (42) and Proposition 4 of \cite{okamotoOptimalCovarianceControl2018} to observe that \((\cdot)^{1/2} \) appears in the deterministic formulation of the constraints.)

More recently, Liu et al. \cite{liuOptimalCovarianceSteering2025} made notable breakthroughs. First, it shows the optimality of the affine \textit{memoryless} state feedback control law for the unconstrained CS problem, i.e. when CCs are not present. Second, it relaxes the nonconvex problem to an SDP, and shows that its first-order optimality conditions imply that the relaxation is tight. Compared to the block-matrix approaches (\cite{skafDesignAffineControllers2010,okamotoOptimalCovarianceControl2018,okamotoOptimalStochasticVehicle2019}), this method explicitly defines the covariance matrix variables and scales better with the growth in problem size (see \cref{tab:comparison}). Its proof of convexification has been extended to the chance-constrained problem \cite{rapakouliasDiscreteTimeOptimalCovariance2023}. 
At the same time, we have observed several drawbacks of \cite{liuOptimalCovarianceSteering2025} in application, which motivated this work: 
\begin{itemize}
\item (\textbf{Limitations of Convexification}) The proof of convexification in \cite{liuOptimalCovarianceSteering2025} requires that the partial of the Lagrangian w.r.t. the control covariance matrix is positive definite (PD) (such as in the expectation-of-quadratic case). For general objective functions, theoretical work is required to extend the proof; for example, when using a non-quadratic objective function \cite{kumagaiRobustCislunarLowThrust2025}, a small quadratic term was added to the objective function for convexification.
	
\item (\textbf{Numerical Reliability}) In scenarios where the state variance is small compared to the mean values \cite{oguriChanceConstrainedControlSafe2024a,kumagaiRobustCislunarLowThrust2025}, state-of-the-art SDP solvers struggle to converge. We attribute the numerical difficulties to dealing with the covariance matrix directly; this necessarily squares the system units and amplifies ill-scaling, causing issues for solvers working in finite precision. Furthermore, the convexification process is only valid up to the numerical accuracy of the LMI that is solved. We have observed that when the solver terminates due to numerical issues, the covariance propagation equations can be lossy.

\item (\textbf{Non-convexity under Chance Constraints}) 
It is common to encounter objective functions or CCs that are represented as convex in the square root form, e.g. quantile objective functions or affine CCs \cite{okamotoOptimalCovarianceControl2018}. Due to the square root operation, these constraints are nonconvex in the covariance matrix.
Thus, a reference covariance matrix for conservatively approximating the square root via linearization \cite{rapakouliasDiscreteTimeOptimalCovariance2023} or iterative algorithms \cite{pilipovskyComputationallyEfficientChance2024} are required.
\end{itemize}

\cref{tab:comparison} summarizes existing solution methods for discrete-time chance-constrained CS (cc-CS).
\begin{table}[t]
\caption{Comparison of existing methods for CS; \textbf{bold} indicates the more favorable property. $n$ is state size; $N$ is horizon length.}
\centering
\begin{tabular}{lcc}
\toprule
& \cite{skafDesignAffineControllers2010,okamotoOptimalCovarianceControl2018,okamotoOptimalStochasticVehicle2019}& \cite{liuOptimalCovarianceSteering2025,rapakouliasDiscreteTimeOptimalCovariance2023,
pilipovskyComputationallyEfficientChance2024}   \\
\midrule
Covariance & Implicit & Explicit\\
Policy & Causal/Non-state & \textbf{Memoryless, State} \\
Objective & \textbf{Flexible} & Quadratic \\
CCs & \textbf{Convex} & Nonconvex \\
Number of LMIs \cite{rapakouliasDiscreteTimeOptimalCovariance2023} & $N$ & $\mathbf{1}$ \\
Size of LMI(s) \cite{rapakouliasDiscreteTimeOptimalCovariance2023} & $\mathbf{n\times n}$ & $ n (N{+}1) \times n (N{+}1)$ \\
Complexity & High & \textbf{Low} \\
\midrule
Bottlenecks & Complexity & Numerical, CC nonconvexity\\
\bottomrule
\end{tabular}
\label{tab:comparison}
\end{table}

The numerical issues encountered due to using the covariance matrix as the dependent variable can be explained by the \textit{square root filter} literature. As early as the 1960s, engineers developing the Apollo Guidance Computer (AGC) were aware of the numerical instability of the standard Kalman filter \cite{grewalApplicationsKalmanFiltering2010}. Potter's equivalent derivation of the filter which uses the Cholesky factor of the covariance matrix as the dependent variable gave rise to the square root Kalman filter; achieving the same accuracy with ``half as many bits" of precision, it was adopted for the AGC \cite{grewalApplicationsKalmanFiltering2010}. Since then, square root methods have been developed to include process noise \cite{dyerExtensionSquarerootFiltering1969}, and alternative LDL factorization-based methods have been proposed \cite{thorntonGramSchmidtAlgorithmsCovariance1975}. Thornton and Bierman later document a numerical comparison of the standard versus the square root methods, demonstrating the divergence of the former for a spacecraft orbit determination scenario \cite{thorntonNumericalComparisonDiscrete1976}. See \cite{kaminskiDiscreteSquareRoot1971,tapleyStatisticalOrbitDetermination2004} for a general review and comparison of these methods.

Based on these observations and literature, we propose an alternative solution method for cc-CS, explicitly defining the covariance matrix Cholesky factor as the dependent variable. Due to the nonconvex representation of the propagation equation of the Cholesky factor, the problem is solved iteratively via sequential convex programming (SCP), utilizing the derivative of the QR decomposition as a key element. Nevertheless, the approach has the following benefits:
\begin{itemize}
	\item It achieves global optimality when the objective function is an expectation-of-quadratic form and CCs are not present. When CCs are present, the approach achieves local optimality.
	\item Compared to the block-matrix type methods \cite{okamotoOptimalCovarianceControl2018,okamotoOptimalStochasticVehicle2019}, it is more computationally efficient and shows better optimality. It is also memoryless, using only a single feedback gain matrix at each time step.
	\item Compared to the full-covariance type methods \cite{liuOptimalCovarianceSteering2025,pilipovskyComputationallyEfficientChance2024} which can fail for problems with small variance, it can achieve solutions with better numerical reliability. The choice of objective function is flexible. We remark that our method uses SCP, but such an iterative approach is nevertheless required for cc-CS using the approach in \cite{liuOptimalCovarianceSteering2025}.
\end{itemize}


We note that this work is not the first to propagate the square root of the covariance matrix in CS. In an earlier work by one of the authors \cite{oguriStochasticSequentialConvex2022}, this was attempted with some shortcomings: the variable that represents the square root grows proportional to the time step, yielding computational complexity similar to the block-matrix approaches. In addition, the feedback gains were retrieved inexactly via least-squares.

\section{Preliminaries}

\subsection{Notation and Preliminaries}
$\mathbb{L}^n$ ($\mathbb{L}_+^n$) denotes the space of $n \times n$ LT matrices (with positive diagonal entries). $\mathbb{S}^n_+$ denotes the space of $n \times n$ symmetric PD  matrices. The Cholesky factor of a matrix \( P \in \mathbb{S}_+^{n} \) is defined as the unique matrix \( S \in \mathbb{L}_+^n \) such that \( P = S S^\top \) \cite[Theorem 4.2.7]{golubMatrixComputations2013}; we write \(S = P^{1/2}\). $\mathrm{diag}(M)$ denotes the vector of diagonal elements of matrix \( M \), and $\mathrm{diag}(a,b, \cdots)$ denotes the diagonal matrix with $a,b, \cdots$ on the diagonals. $A \succ (\succeq) B$ denotes that $A - B $ is positive (semi)definite. \([N] := \{0, 1, \cdots, N\}\) for $N \in \N$.
We refer to the ``square root" of the covariance matrix \(P\) to refer to (not necessarily square) matrices \(L\) that satisfy \(P = L L^\top\). 
$\mathrm{tril}(\cdot)$ sets all but the LT part of a matrix to zero.
$\mathrm{vec}(\cdot)$ vectorizes a matrix.
$\mathrm{vectril}(\cdot)$ vectorizes the LT part of a matrix.
$\cov{\cdot}$ denotes the covariance matrix of a random variable.

\subsection{Problem Formulation}
We address the finite-horizon discrete-time CS problem for a linear time-varying system with additive Gaussian noise. The goal is to find a state feedback controller that steers the state mean and covariance from their initial values to desired terminal values while minimizing a cost function and obeying CCs. Mathematically, the problem is formulated as follows.
\begin{subequations}
\label{eq:problem_statement}
\begin{align}
	& \mathrlap{\minimize_{\bm{v}, \bm{K}} \quad J = \sum_{k=0}^{N-1} J_k (x_k, u_k) }\\
	& \suchthat &&x_{k+1} = A_k x_k + B_k u_k + G_k w_k, \quad k \in [N{-}1] \\
	& && u_k = v_k + K_k (x_k - \E{x_k}), \quad k \in [N{-}1] \label{eq:feedback_control_law}\\
	& &&x_0 \sim \mathcal{N}(\mu_{\mathrm{init}}, P_{\mathrm{init}}) \\
	& &&\mu_{\mathrm{fin}} = \E{x_N}\\
	& &&P_{\mathrm{fin}} \succeq \E{(x_N - \E{x_N})(x_N - \E{x_N})^\top} \\
	& &&\mathbb{P}(x_k \in \mathcal{X}) \geq 1 - \epsilon_x, \quad k \in [N] \label{eq:state_chance_constraint}\\ 
	& &&\mathbb{P}(u_k \in \mathcal{U}) \geq 1 - \epsilon_u, \quad k \in [N{-}1] \label{eq:control_chance_constraint}
\end{align}
\end{subequations}
where \( x_k \in \mathbb{R}^n \) is the state vector, \( u_k \in \mathbb{R}^m \) is the control input, and \( w_k \in \mathbb{R}^n \) is standard Gaussian process noise. \( \bm{v} := \{v_k\}_{k=0}^{N-1}\) and \(\bm{K} := \{K_k\}_{k=0}^{N-1}\) are respectively the nominal control input and the state feedback gain to be designed, with \( v_k \in \mathbb{R}^m \) and \( K_k \in \mathbb{R}^{m \times n} \). Note that due to the affine feedback control law, the state and the control input are Gaussian at each time step.
The matrices \( A_k \), \( B_k \), and \( G_k \) are of appropriate dimensions.

We consider two types of cost functions:
\begin{equation} \label{eq:objective_function}
	J_k (x_k, u_k) = \begin{cases}
	\E{x_k^\top Q_k x_k + u_k^\top R_k u_k} \\
	\mathcal{Q}_{\|W_{k}^x x_k\|_2}(p_J) +
	\mathcal{Q}_{\|W_{k}^u u_k\|_2}(p_J)
	\end{cases}
\end{equation}
The former is a standard \textbf{expectation-of-quadratic} (EoQ) cost with \( Q_k \succeq 0 \) and \( R_k \succ 0 \).
The latter is a \textbf{quantile-of-norm} (QoN) cost, where the quantile function is defined as:
\begin{equation}
	\mathcal{Q}_{T}(\gamma) = \inf \{ t > 0 : \mathbb{P}(T \leq t) \geq \gamma \}.
\end{equation}
$W_k^x \in \mathbb{R}^{w_x \times n}$ and $W_k^u \in \mathbb{R}^{w_u \times m}$ are weighting matrices for the state and control, respectively. Such a cost formulation is useful for risk sensitive scenarios where the worst-case scenario (up to a specified probability) is of more interest than the expected value, such as in finance \cite{rockafellarOptimizationConditionalValueatrisk2000}, statistical machine learning \cite{eastwoodProbableDomainGeneralization2022}, and spacecraft mission design \cite{oguriChanceConstrainedControlSafe2024a}.

The sets \( \mathcal{X} \) and \( \mathcal{U} \) represent state and control feasible sets, respectively, with associated violation probabilities \( \epsilon_x \) and \( \epsilon_u \). Consider CCs in either a linear or a norm constraint
\begin{align} 
	\mathbb{P}(\alpha_j ^\top x_k \leq \beta_j) \geq 1 - p_j \label{eq:linear_chance_constraint}\\
	\mathbb{P}(\| x_k \|_2 \leq \gamma) \geq 1 - p_\gamma \label{eq:norm_chance_constraint}
\end{align}
with \( \alpha_j \in \mathbb{R}^n \), \( \beta_j \in \mathbb{R} \), \(\gamma\in \mathbb{R}\), \( \gamma > 0 \) and violation probabilities  \( p_j \), \( p_\gamma \). The same forms apply to the input \( u_k \).

Let $P_k := \cov{x_k}$.
We make the following assumption, reasonable in practical applications and also made in \cite{liuOptimalCovarianceSteering2025}:
\begin{assumption} \label{assump:positive_definite_state}
	$P_k \succ 0,\quad \forall k \in [N].$
\end{assumption}

\subsection{QR Decomposition}

The QR decomposition is a key element of propagating the square root of the covariance matrix, \textit{while keeping the square root to be in \(\mathbb{L}^n\)}. Various algorithms have been proposed for the time update of the square root Kalman filter, but many can be expressed as an ``economy-size'' QR decomposition \cite{tapleyStatisticalOrbitDetermination2004}.

\begin{lemma}[Ch. 3.8 of \cite{ipsenNumericalMatrixAnalysis2009}]\label{lemma:existence_and_uniqueness_of_economy_size_qr_decomposition}
	Let \( M \in \mathbb{R}^{p \times q} \), \(p \geq q\).
	\begin{itemize}
		\item (Existence) There exist \(Q \in \mathbb{R}^{p \times q}\) and \(R \in \mathbb{R}^{q \times q}\) such that \(Q^\top Q = I_q\), \(R\) is upper triangular, and \(M = QR\).
		\item (Uniqueness) If \(M\) has full rank, i.e. \(\mathrm{rank}(M) = q\), then the matrices \(Q\) and \(R\) are unique up to a sign change of the diagonal entries of \(R\). If we require the diagonal entries of \( R \) to be positive, then the decomposition is unique.
	\end{itemize}
\end{lemma}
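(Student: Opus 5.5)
The plan is to prove existence constructively via Gram--Schmidt, handling possible rank deficiency of \(M\), and then to get uniqueness from the Cholesky factorization of \(M^\top M\).

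\textbf{Existence.} Write \(M = [m_1, \dots, m_q]\) and process the columns in order. Suppose orthonormal vectors \(q_1,\dots,q_{j-1}\) have already been built with \(\mathrm{span}\{m_1,\dots,m_{j-1}\} \subseteq \mathrm{span}\{q_1,\dots,q_{j-1}\}\). Form the residual
\[
r_j = m_j - \sum_{i<j} (q_i^\top m_j) q_i ,
\]
and set \(R_{ij} = q_i^\top m_j\) for \(i<j\).
\begin{itemize}
\item If \(r_j \neq 0\), put \(R_{jj} = \|r_j\|_2\) and \(q_j = r_j / R_{jj}\).
\item If \(r_j = 0\), put \(R_{jj} = 0\) and let \(q_j\) be any unit vector orthogonal to \(q_1,\dots,q_{j-1}\).
\end{itemize}
Such a unit vector always exists because \(j \le q \le p\). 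The identity \(m_j = \sum_{i\le j} R_{ij} q_i\) holds by construction in both cases. Hence \(M = QR\) with \(Q^\top Q = I_q\) and \(R\) upper triangular. The degenerate branch is the only place where \(p \ge q\) is used.

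\textbf{Uniqueness.} Assume \(\mathrm{rank}(M) = q\) and \(M = QR\). Then \(M^\top M = R^\top Q^\top Q R = R^\top R\). Since \(M^\top M\) is symmetric positive definite, \(R\) is nonsingular, so every diagonal entry of \(R\) is nonzero.

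Let \(D = \mathrm{diag}(\mathrm{sign}(R_{11}), \dots, \mathrm{sign}(R_{qq}))\). Then \(DR\) is upper triangular with positive diagonal, and \((DR)^\top (DR) = M^\top M\). This means \((DR)^\top\) is a lower-triangular factor of \(M^\top M\) with positive diagonal. By uniqueness of the Cholesky factor (the fact the paper cites as [Theorem 4.2.7] of Golub--Van Loan), \(DR\) is uniquely determined by \(M\).

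Once \(R\) is fixed, \(Q = MR^{-1}\) is also fixed. Consequently any two full-rank QR factorizations differ only by a diagonal sign matrix \(D\), acting as \(Q \mapsto QD\) and \(R \mapsto DR\). Imposing positive diagonal entries of \(R\) forces \(D = I\), which gives uniqueness.

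\textbf{Main obstacle.} The only real subtlety is the rank-deficient case in the existence part: the residual can vanish, and \(Q\) must then be completed with an arbitrary orthonormal vector. This is also why uniqueness fails without the full-rank assumption, since that completion is not unique. The uniqueness argument itself reduces entirely to Cholesky uniqueness, provided one checks that \(M^\top M \succ 0\) and hence that the diagonal of \(R\) is nonzero.
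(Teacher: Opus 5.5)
The paper gives no proof of this lemma; it imports it from Ipsen's textbook, so there is no in-paper argument to compare against. Your proof is correct and self-contained.

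For existence, textbook treatments of the economy-size factorization usually triangularize $M$ with orthogonal transformations (Householder reflections or Givens rotations). This gives a full factorization $M = [Q\;Q_2]\begin{bmatrix} R \\ 0\end{bmatrix}$, and one then discards $Q_2$. That route needs no case split for rank deficiency and is the numerically stable algorithm. Your Gram--Schmidt construction is more elementary and produces the economy-size factors directly. Through the branch $r_j = 0$, it also shows exactly where $p \ge q$ is needed.

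Your uniqueness argument, via $M^\top M = R^\top R$ and uniqueness of the Cholesky factor, is the standard one, and it suits this paper well. With $M = X_{k+1}^\top$, it shows directly that $\mathrm{qr}(X_{k+1}^\top)^\top$ is the Cholesky factor of $X_{k+1}X_{k+1}^\top = P_{k+1}$, which is the content of \cref{lemma:sqrt_propagation}. It also relies only on the Golub--Van Loan theorem the paper already cites, whose proof goes through $LDL^\top$ rather than QR, so there is no circularity.

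Finally, your description of the non-uniqueness as $(Q,R)\mapsto(QD,DR)$, with $D$ a diagonal sign matrix, is the precise form of the statement's looser phrase ``sign change of the diagonal entries of $R$''. The sign flips act on entire rows of $R$ and on the corresponding columns of $Q$, not just on the diagonal.
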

From here on, ``QR decomposition'' refers to the economy-size QR decomposition with positive diagonal entries\footnote{In MATLAB, this requires using $\mathrm{qr}(M, \text{``econ"})$. Then, to ensure uniqueness, if the function returns an \( R \) matrix with negative diagonal entries, multiply the corresponding columns of \( Q \) and rows of \( R \) by -1 to ensure positive diagonals.}.
Note that when \(M\) is full rank, then so is \(R\), hence \(R\) is invertible.

\subsection{QR Decomposition Derivative}
To handle square root covariance propagation in optimization, we use the derivative of the QR decomposition operation (i.e. the sensitivity of $Q$ and $R$ w.r.t. perturbations in $M$). $\dd R$ is written as a function of \(Q\), \( R\), and  $\dd M$ \cite{deleeuwDifferentiatingQRDecomposition2023}:
\begin{equation}
	\dd R(\dd M, Q, R) = Q^\top (\dd M) - \mathrm{tril}(Q^\top (\dd M) R^{-1}) R.
\end{equation}
Using the Taylor expansion, we can approximate the QR decomposition as a linear function of perturbations in \( M \), around some reference \( \bar{M} = \bar{Q} \bar{R} \):
\begin{equation}
	\mathrm{qr}(M) \approx \bar{R} + \dd R(M - \bar{M}, \bar{Q}, \bar{R}).
\end{equation}
where \( \mathrm{qr}(\cdot) \) denotes the operation of extracting the upper triangular factor \( R \) from the QR decomposition.
\( \dd Q \) can also be computed as a function of \( \dd M\) \cite{deleeuwDifferentiatingQRDecomposition2023}; it is not used for this work, so we omit it here.

\section{Deterministic Problem Formulation}
By applying the previous results, we can formulate \cref{eq:problem_statement} as a nonconvex deterministic optimization problem.
\subsection{Closed-Loop Square Root Covariance Propagation}
Since the state is controlled by the affine feedback control law, the closed-loop dynamics are given for $k \in [N{-}1]$ as
\begin{equation}
	x_{k+1} = A_k x_k + B_k (v_k + K_k (x_k - \E{x_k})) + G_k w_k.
\end{equation}
It is straightforward to derive the mean/covariance dynamics:
\begin{align} \label{eq:mean_dynamics}
	\mu_{k+1} &= A_k \mu_k + B_k v_k , \quad \mu_k := \E{x_k}\\
\label{eq:full_covariance_propagation}
	P_{k+1}& = (A_k + B_k K_k) P_k (A_k + B_k K_k)^\top + G_k G_k^\top.
\end{align}
Letting $S_k := P_k^{1/2}$, define the following matrix:
\begin{equation} \label{eq:X_definition}
	X_{k+1} := \begin{bmatrix} (A_k + B_k K_k) S_k & G_k \end{bmatrix}.
\end{equation}
\cref{lemma:rank_M_M_top_rank_M,lemma:positive_definite_iff_rank_n} are used to show \cref{lemma:rank_X_n_iff_P_succ_0}. They are standard results so we omit the  proofs.
\begin{lemma} \label{lemma:rank_M_M_top_rank_M}
	For any $M \in \mathbb{R}^{p \times q}$, $\rank(M M^\top) = \rank(M).$
\end{lemma}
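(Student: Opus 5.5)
The plan is to show that $M$ and $MM^\top$ have the same null space, viewed as linear maps on $\mathbb{R}^p$. For this I will pass to their transposes: $M^\top$ acting on $\mathbb{R}^p$, and the symmetric matrix $MM^\top$ acting on $\mathbb{R}^p$. Row rank equals column rank, so $\rank(M) = \rank(M^\top)$. The rank--nullity theorem on $\mathbb{R}^p$ then gives
\[
\rank(MM^\top) = p - \dim\ker(MM^\top), \qquad \rank(M^\top) = p - \dim\ker(M^\top).
\]
It therefore suffices to prove $\ker(MM^\top) = \ker(M^\top)$.

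The inclusion $\ker(M^\top) \subseteq \ker(MM^\top)$ is immediate: if $M^\top y = 0$, then $MM^\top y = M(M^\top y) = 0$.

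For the reverse inclusion, take $y \in \mathbb{R}^p$ with $MM^\top y = 0$. Multiplying on the left by $y^\top$ gives
\[
0 = y^\top M M^\top y = (M^\top y)^\top (M^\top y) = \|M^\top y\|_2^2 .
\]
Hence $M^\top y = 0$. This argument uses the fact that we work over $\mathbb{R}$ with the Euclidean inner product, so a sum of squares vanishes only when every term vanishes.

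This reverse inclusion is the only step with real content, and it is not a genuine obstacle. The one point to flag is that the real field matters: the statement fails for the plain transpose over $\mathbb{C}$, where one would need the conjugate transpose instead. Combining the two inclusions with the rank--nullity identities above yields $\rank(MM^\top) = \rank(M^\top) = \rank(M)$.
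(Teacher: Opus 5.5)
Your proof is correct and complete. The paper gives no proof of this lemma (it omits it as a standard fact), and your argument is the textbook one it alludes to: you show $\ker(MM^\top)=\ker(M^\top)$ in $\mathbb{R}^p$ via $y^\top MM^\top y=\|M^\top y\|_2^2$, then apply rank--nullity and $\rank(M^\top)=\rank(M)$. One small slip: your opening phrase speaks of $M$ itself acting on $\mathbb{R}^p$, but $M$ acts on $\mathbb{R}^q$; you fix this in the next sentence by working with $M^\top$.
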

\begin{lemma} \label{lemma:positive_definite_iff_rank_n}
	A symmetric positive semidefinite matrix $P$ of size $n \times n$ is strictly PD if and only if $\rank(P) = n$.
\end{lemma}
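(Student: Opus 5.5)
The plan is to prove both directions directly from the quadratic-form definition of definiteness, using the fact that $\rank(P) = n$ for a square $n \times n$ matrix is equivalent to $P$ having trivial null space. The only nontrivial tool is a factorization $P = L L^\top$ of the positive semidefinite matrix $P$. One way to get it is the spectral theorem: write $P = U \Lambda U^\top$ with $U$ orthogonal and $\Lambda = \mathrm{diag}(\lambda_1,\dots,\lambda_n)$, $\lambda_i \geq 0$, and take $L = U \Lambda^{1/2}$.

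\emph{($\Rightarrow$)} Suppose $P \succ 0$ and $P x = 0$ for some $x$. Then $x^\top P x = 0$, and strict positivity of the quadratic form forces $x = 0$. Hence the null space of $P$ is trivial, and by rank--nullity $\rank(P) = n$.

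\emph{($\Leftarrow$)} Suppose $P \succeq 0$ with $\rank(P) = n$, and let $x \neq 0$. Since $P \succeq 0$, it suffices to rule out $x^\top P x = 0$. Using the factorization, $x^\top P x = x^\top L L^\top x = \|L^\top x\|_2^2$. So $x^\top P x = 0$ would give $L^\top x = 0$, hence $P x = L (L^\top x) = 0$. This puts a nonzero vector in the null space of $P$, contradicting $\rank(P) = n$. Therefore $x^\top P x > 0$ for all $x \neq 0$, i.e.\ $P \succ 0$.

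An equivalent route is entirely spectral: $P$ is PD iff every $\lambda_i > 0$, and $\rank(P) = \rank(\Lambda)$ equals the number of nonzero $\lambda_i$, which is $n$ iff all $\lambda_i > 0$ because they are already nonnegative. There is no real obstacle here. The only point needing care is the reverse direction, where positive semidefiniteness is essential: it is what lets $x^\top P x = 0$ imply $P x = 0$, via the factor $L$ or the nonnegative eigenvalues. For indefinite symmetric matrices full rank does not imply definiteness.
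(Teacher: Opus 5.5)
Your proof is correct in both directions, and you rightly identify $x^\top P x = \|L^\top x\|_2^2 = 0 \Rightarrow Px = 0$ as the one step where positive semidefiniteness is needed. The paper gives no proof of this lemma; it calls it standard and omits it. Your quadratic-form argument, or the equivalent eigenvalue count, is simply the standard justification the paper takes for granted.
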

\begin{lemma} \label{lemma:rank_X_n_iff_P_succ_0}
	Let $P_{k+1}$, $X_{k+1}$ be given by \cref{eq:full_covariance_propagation}, \cref{eq:X_definition}. Then,
	\begin{equation}
		\rank(X_{k+1}) = n \Leftrightarrow P_{k+1} \succ 0.
	\end{equation}
\end{lemma}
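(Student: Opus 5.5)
The plan is to show that $X_{k+1}X_{k+1}^\top = P_{k+1}$ and then chain \cref{lemma:rank_M_M_top_rank_M} and \cref{lemma:positive_definite_iff_rank_n}.

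First, I will expand the product using the block structure of \cref{eq:X_definition}:
\begin{equation*}
X_{k+1}X_{k+1}^\top = (A_k + B_k K_k) S_k S_k^\top (A_k + B_k K_k)^\top + G_k G_k^\top .
\end{equation*}
Since $S_k = P_k^{1/2}$ satisfies $S_k S_k^\top = P_k$, this is exactly the right-hand side of \cref{eq:full_covariance_propagation}, so $X_{k+1}X_{k+1}^\top = P_{k+1}$. This step only needs $S_k S_k^\top = P_k$, which holds for the Cholesky factor under \cref{assump:positive_definite_state}. In fact it holds for any square root, so the assumption is used only to make $S_k$ well defined.

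Next, I apply \cref{lemma:rank_M_M_top_rank_M} with $M = X_{k+1}\in\mathbb{R}^{n\times 2n}$, which gives $\rank(P_{k+1}) = \rank(X_{k+1})$. Since $P_{k+1} = X_{k+1}X_{k+1}^\top$ is symmetric positive semidefinite, \cref{lemma:positive_definite_iff_rank_n} applies and gives $P_{k+1}\succ 0 \Leftrightarrow \rank(P_{k+1}) = n$. Combining the two yields $\rank(X_{k+1}) = n \Leftrightarrow P_{k+1}\succ 0$, in both directions at once.

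I expect no real obstacle. The only points needing care are that $P_{k+1}$ is symmetric positive semidefinite, which is immediate from its Gram form $X_{k+1}X_{k+1}^\top$, and that the factorization identity is stated explicitly, since $S_k$ is defined through the Cholesky factor.
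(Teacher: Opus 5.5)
Your proof is correct. The forward direction is the same as the paper's: both go through $\rank(X_{k+1}X_{k+1}^\top) = \rank(X_{k+1})$ and \cref{lemma:positive_definite_iff_rank_n}. You also state explicitly the identity $X_{k+1}X_{k+1}^\top = P_{k+1}$, which the paper uses without stating it.

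The reverse direction is where you differ. The paper writes $P_{k+1} = \widetilde{P}_{k+1} + G_kG_k^\top$, with $\widetilde{P}_{k+1} := (A_k+B_kK_k)P_k(A_k+B_kK_k)^\top$. It then asserts that $P_{k+1}\succ 0$ implies $\widetilde{P}_{k+1}\succ 0$ or $G_kG_k^\top \succ 0$, and finishes by noting that the column space of $X_{k+1}$ contains that of each block. That disjunction does not follow. Take $n=2$, $S_k = I_2$, $K_k = 0$, $A_k = \mathrm{diag}(1,0)$ and $G_k = \mathrm{diag}(0,1)$. Then $P_{k+1} = I_2 \succ 0$, yet both summands are singular. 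The lemma itself still holds there, since $\rank(X_{k+1}) = 2$.

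You use \cref{lemma:rank_M_M_top_rank_M} as an equality of ranks rather than a one-way implication. That gives both directions at once and covers exactly this case, where the two blocks only jointly span $\mathbb{R}^n$. The block-wise column-space argument can certify full rank only when a single block already has rank $n$. Your route is therefore shorter and, unlike the paper's reverse step, valid in general.
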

\begin{proof}
	We use \cref{lemma:rank_M_M_top_rank_M,lemma:positive_definite_iff_rank_n} throughout the proof. \\
	($\Rightarrow$) 
	$\rank(X_{k+1}) {=} n \Rightarrow \rank(X_{k+1} X_{k+1}^\top) = n \Rightarrow P_{k+1} \succ 0$.\\
	($\Leftarrow$)
	Define $\widetilde{P}_{k+1} := (A_k + B_k K_k) P_k (A_k + B_k K_k)^\top$.
	\begin{align*}
	P_{k+1} \succ 0 &\Rightarrow \widetilde{P}_{k+1} \succ 0 \lor G_k G_k^\top \succ 0 \\
	&\Rightarrow \rank(\widetilde{P}_{k+1}) = n \lor \rank(G_k G_k^\top) = n \\
	&\Rightarrow \rank((A_k + B_k K_k) S_k) = n \lor \rank(G_k) = n \\
	&\Rightarrow \rank(X_{k+1}) = n.
	\end{align*}
	The last implication is since the column space of $X_{k+1}$ includes the column space of its submatrices.
\end{proof}

\begin{lemma}[Closed-Loop Square Root Covariance Propagation]
	\label{lemma:sqrt_propagation}
	The covariance propagation \cref{eq:full_covariance_propagation} is equivalently expressed in Cholesky factor form using $S_k = P_k^{1/2} \in \mathbb{L}_+^n$ as:
	\begin{equation} \label{eq:square_root_covariance_propagation_nonlinear_feedback_control}
		S_{k+1} = \mathrm{qr}(X_{k+1}^\top)^\top
	\end{equation}
	with $X_{k+1}$ defined as in \cref{eq:X_definition}.
\end{lemma}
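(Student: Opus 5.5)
The plan is to show that the right-hand side of \cref{eq:square_root_covariance_propagation_nonlinear_feedback_control} is well defined, lies in $\mathbb{L}_+^n$, and squares to $P_{k+1}$. Uniqueness of the Cholesky factor then identifies it with $S_{k+1} = P_{k+1}^{1/2}$.

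The first step is to check that the QR decomposition of $X_{k+1}^\top$ exists and is unique. We have $X_{k+1}^\top \in \mathbb{R}^{2n \times n}$; more generally it has $n + n_w$ rows, where $n_w$ is the number of columns of $G_k$. Under \cref{assump:positive_definite_state}, $P_{k+1} \succ 0$, so \cref{lemma:rank_X_n_iff_P_succ_0} gives $\rank(X_{k+1}) = n$. Hence $X_{k+1}^\top$ has at least as many rows as columns and has full column rank $n$. \cref{lemma:existence_and_uniqueness_of_economy_size_qr_decomposition} then yields a unique economy-size factorization $X_{k+1}^\top = QR$ with $Q^\top Q = I_n$ and $R$ upper triangular with positive diagonal. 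In particular $R^\top \in \mathbb{L}_+^n$.

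The second step is the computation
\begin{equation*}
	R^\top R = R^\top Q^\top Q R = X_{k+1} X_{k+1}^\top = (A_k + B_k K_k) S_k S_k^\top (A_k + B_k K_k)^\top + G_k G_k^\top.
\end{equation*}
Using $S_k S_k^\top = P_k$ and \cref{eq:full_covariance_propagation}, this equals $P_{k+1}$. Since $R^\top$ is lower triangular with positive diagonal and $R^\top (R^\top)^\top = P_{k+1} \succ 0$, uniqueness of the Cholesky factor \cite[Theorem 4.2.7]{golubMatrixComputations2013} gives $R^\top = P_{k+1}^{1/2} = S_{k+1}$. That is, $S_{k+1} = \mathrm{qr}(X_{k+1}^\top)^\top$.

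For the converse direction of the claimed equivalence, suppose $S_{k+1}$ is generated by \cref{eq:square_root_covariance_propagation_nonlinear_feedback_control} and set $P_k = S_k S_k^\top$. The same identity shows that $S_{k+1} S_{k+1}^\top$ satisfies \cref{eq:full_covariance_propagation}, so the two recursions produce the same sequence of covariances from the same initial $S_0 = P_{\mathrm{init}}^{1/2}$. An induction on $k$ makes this formal.

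The only delicate point is the full-rank requirement needed for uniqueness and positivity of the diagonal. It is handled by \cref{assump:positive_definite_state} together with \cref{lemma:rank_X_n_iff_P_succ_0}. I would also remark that $X_{k+1}^\top$ has at least $n$ rows, so the condition $p \geq q$ of \cref{lemma:existence_and_uniqueness_of_economy_size_qr_decomposition} holds.
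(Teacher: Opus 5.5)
Your proof is correct and follows essentially the same route as the paper: you get full rank of $X_{k+1}$ from \cref{assump:positive_definite_state} and \cref{lemma:rank_X_n_iff_P_succ_0}, use uniqueness of the positive-diagonal economy-size QR of $X_{k+1}^\top$, and use $R^\top R = X_{k+1}X_{k+1}^\top = P_{k+1}$. Beyond the paper, you state explicitly the Cholesky-uniqueness step that identifies $R^\top$ with $P_{k+1}^{1/2}$, which the paper leaves implicit, and you correctly size $X_{k+1}^\top$ as $2n\times n$ where the paper writes $(n+m)\times n$.
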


\begin{proof}
The covariance propagation can be expressed as:
\begin{equation} \label{eq:covariance_propagation}
	S_{k+1} S_{k+1}^\top = X_{k+1} X_{k+1}^\top.
\end{equation}
Note that we want $S_{k+1} \in \mathbb{L}_+^n$, while $X_{k+1} \in \mathbb{R}^{n \times (n+m)}$.
For a matrix $Q_{k} \in \R^{(n+m) \times n}$ such that $Q_{k}^\top Q_k= I_n$,
\begin{subequations}
\begin{align}
     X_{k+1}^\top &= Q_{k} S_{k+1}^\top  \label{eq:Q_S_X_relation_1}\\
	\Rightarrow X_{k+1} X_{k+1}^\top &= S_{k+1} Q_{k}^\top Q_{k} S_{k+1}^\top = S_{k+1} S_{k+1}^\top.
\end{align}%
\label{eq:Q_S_X_relation}
\end{subequations}%
Since $S_{k+1} \in \mathbb{L}_+^n$, its transpose is upper triangular, so \cref{eq:Q_S_X_relation_1} is a QR decomposition of $X_{k+1}^\top$. 
Combining \cref{lemma:rank_X_n_iff_P_succ_0} and \cref{assump:positive_definite_state}, $X_{k+1}$ is full rank.
Then, from \cref{lemma:existence_and_uniqueness_of_economy_size_qr_decomposition}, there exists a unique set of matrices $(Q_{k}, S_{k+1})$ that satisfy the equation.
Thus, this QR decomposition is unique. 
\end{proof}
\cref{lemma:sqrt_propagation} ensures that $S_{k+1}$ obtained from \cref{eq:square_root_covariance_propagation_nonlinear_feedback_control} provides a unique matrix in $\mathbb{L}_n^+$ such that \cref{eq:full_covariance_propagation} is satisfied.

Since both $S_k$ and $K_k$ are unknowns, \cref{eq:square_root_covariance_propagation_nonlinear_feedback_control}, via \cref{eq:X_definition}, has a bilinear term \( K_k S_k \); for the sake of convexity, we perform a change of variables and define $L_k = K_k S_k \in \mathbb{R}^{m \times n}$. Then, $K_k$ can be uniquely recovered as $K_k = L_k S_k^{-1}$. The inverse exists since $S_k$ is the Cholesky factor of \( P_k \), which is PD by \cref{assump:positive_definite_state}. Note that $L_k L_k^\top = K_k S_k S_k^\top K_k^\top = K_k P_k K_k^\top = \cov{u_k}$. With this change of variables, we get
\begin{equation} \label{eq:square_root_covariance_propagation}
	S_{k+1} = \mathrm{qr}(X_{k+1}^\top)^\top, \ X_{k+1} = \begin{bmatrix} A_k S_k + B_k L_k & G_k \end{bmatrix}
\end{equation}
with its first-order approximation given by:
\begin{equation} \label{eq:square_root_covariance_propagation_linearized}
	\hspace{-1pt}S_{k+1} \approx \mathrm{qr}(\bar{X}_{k+1}^\top)^\top + \dd R( \Delta X_{k+1}^\top , \bar{Q}_{k+1}, \bar{R}_{k+1})^\top
\end{equation}
with \( \bar{X}_{k+1} := \begin{bmatrix} A_k \bar{S}_k + B_k \bar{L}_k & G_k \end{bmatrix} \), \(\bar{S}_k\), and \(\bar{L}_k\) being the reference values, and \(\bar{Q}_{k+1} \bar{R}_{k+1} = \bar{X}_{k+1}\) being the QR decomposition of \(\bar{X}_{k+1}\). 
Define $\Delta X_{k} := X_k - \bar{X}_k$.

\subsection{Objective Function}
After substituting \( u_k = K_k x_k = L_k S_k^{-1} x_k \), and performing some standard manipulations, the objective function can be expressed as deterministic convex functions of \( L_k \) and \( S_k \):

{
\setlength{\abovedisplayskip}{0pt} 
\setlength{\belowdisplayskip}{0pt} 
\small
\begin{equation} \label{eq:objective_deterministic}
	\hspace{-3pt}
	J = \begin{cases}
	\sum_{k=0}^{N-1} \mu_k^\top Q_k \mu_k + v_k^\top R_k v_k & \\
	\quad + \mathrm{trace} (Q_k S_k S_k^\top + R_k L_k L_k^\top) & \text{(EoQ)} \\
	\sum_{k=0}^{N-1} \norm{W_k^x x_k}_2 + \sqrt{\mathcal{Q}_{\chi^2_{w_x}}(p_J)} \ \norm{W_k^x S_k}_2 & \\
	\quad + \norm{W_k^u v_k}_2 + \sqrt{\mathcal{Q}_{\chi^2_{w_u}}(p_J)} \ \norm{W_k^u L_k}_2 & \text{(QoN)}.
	\end{cases}
\end{equation}
\setlength{\abovedisplayskip}{0pt} 
\setlength{\belowdisplayskip}{0pt} 
}

The QoN expression is a convex upper bound based on the triangle inequality \cite{oguriChanceConstrainedControlSafe2024a}; \( \mathcal{Q}_{\chi^2_d}(\cdot) \) is the inverse of the chi-squared cumulative distribution function with \( d \) degrees of freedom.

\begin{remark} \label{remark:convexifying_term}
	Although an approximation, the QoN cost is easily handled in the proposed method as a convex function of the variables (also by the block matrix methods \cite{ridderhofChanceConstrainedCovarianceControl2020,oguriChanceConstrainedControlSafe2024a}). The seminal work \cite{liuOptimalCovarianceSteering2025} relies on an assumption on the objective function for convexification, and we are not aware of a convex upper bound for this formulation. Workarounds such as linearizarion break the proof of lossless convexification in \cite{liuOptimalCovarianceSteering2025}. To retain the proof, an extra regularization term \(\eta \trace(Y_k)\) with small \( \eta > 0 \) can be added, as we do in \cite{kumagaiRobustCislunarLowThrust2025} ($Y_k := \cov{u_k}$).
\end{remark}

\subsection{Constraints}
The square root representation allows all other constraints to be rewritten as convex functions of the variables.

The structure of \(S_k\) is imposed by
\begin{equation} \label{eq:S_structure_constraint}
	S_k \in \mathbb{L}^n, \quad \mathrm{diag}(S_k) \geq 0, \quad k \in [N].
\end{equation}
The initial distributional constraint is imposed by
\begin{equation} \label{eq:initial_distributional_constraint}
	\mu_0 = \mu_{\mathrm{init}}, \quad S_0 = P_{\mathrm{init}}^{1/2}.
\end{equation}
The terminal distributional constraint is imposed by \cite{okamotoOptimalCovarianceControl2018}
\begin{equation} \label{eq:terminal_distributional_constraint}
	\mu_N = \mu_{\mathrm{fin}}, \quad \|P_{\mathrm{fin}}^{-1/2} S_N\|_2 - 1 \leq 0.
\end{equation}
The affine CC in \cref{eq:linear_chance_constraint} can be exactly expressed as \cite{okamotoOptimalCovarianceControl2018}:
\begin{equation} \label{eq:linear_chance_constraint_deterministic}
	\alpha_j^\top \mu_k + \mathcal{Q}_{\mathcal{N}}(1 - p_j) \|S_k^\top \alpha_j\|_2 - \beta_j \leq 0, \quad k \in [N]
\end{equation}
where \( \mathcal{Q}_{\mathcal{N}}(\cdot) \) is the inverse of the standard normal cumulative distribution function.
For the norm CC, similar to the QoN cost, the CC can be conservatively imposed as \cite{oguriChanceConstrainedControlSafe2024a}:
\begin{equation} \label{eq:norm_chance_constraint_deterministic}
	\norm{\mu_k}_2 + \sqrt{Q_{\chi^2_n}(1 - p_\gamma)} \ \norm{S_k}_2 - \gamma \leq 0, \quad k \in [N]
\end{equation}
To impose control CCs, replace \( \mu_k \) with \( v_k \) and \( S_k \) with \( L_k \).

To summarize, \cref{eq:problem_statement} is converted into a nonconvex optimization problem as follows:
\begin{equation} \label{eq:nonconvex_problem_statement}
	\minimize_{\bm{v}, \bm{L}, \bm{\mu}, \bm{S}} \cref{eq:objective_deterministic} \quad \text{s.t.} 
	\quad 
	\text{\cref{eq:S_structure_constraint,eq:mean_dynamics,eq:square_root_covariance_propagation,eq:initial_distributional_constraint,eq:terminal_distributional_constraint,eq:linear_chance_constraint_deterministic,eq:norm_chance_constraint_deterministic}}
\end{equation}
where the only source of nonconvexity is the square root covariance dynamics \cref{eq:square_root_covariance_propagation}. For notational simplicity, we define \(\bm{L} := \{L_k\}_{k=0}^{N-1}\), \(\bm{\mu} := \{\mu_k\}_{k=0}^{N}\), and \(\bm{S} := \{S_k\}_{k=0}^{N}\).
\section{Algorithm Overview}
In order to take advantage of the convex representation of the objective and constraints, we use the SCP algorithm \texttt{SCvx*} \cite{oguriSuccessiveConvexificationFeasibility2023a} to solve \cref{eq:nonconvex_problem_statement}; this ensures that the converged solution obeys nonlinear dynamics \cref{eq:square_root_covariance_propagation_nonlinear_feedback_control}and is locally optimal. In general, any SCP algorithm can be used to ensure the satisfaction of \cref{eq:square_root_covariance_propagation_nonlinear_feedback_control}, but local optimality must be met in order to apply the results of \cref{th:global_optimality,th:local_optimality}. At each iteration, the linearized dynamics \cref{eq:square_root_covariance_propagation_linearized} are imposed with slack variables,\par
{
\setlength{\abovedisplayskip}{0pt} 
\setlength{\belowdisplayskip}{0pt} 
\small
\begin{equation} \label{eq:linearized_dynamics}
\mathrm{vectril} \{ S_{k+1} - \mathrm{qr}(\bar{X}_{k+1}^\top)^\top {-} \dd R(\Delta X_{k+1}^\top, \bar{Q}_{k+1}, \bar{R}_{k+1})^\top \} = \xi_k
\end{equation}
}%
where \( \xi_k \in \mathbb{R}^{n(n+1)/2}\) is penalized in the objective function to promote constraint satisfaction. 
The reference QR matrices are computed at initialization and updated when a step is accepted:
\begin{equation} \label{eq:reference_update}
	\{\bar{Q}_{k+1}, \bar{R}_{k+1}\} \gets \mathrm{qr}(\begin{bmatrix} A_k \bar{S}_k + B_k \bar{L}_k & G_k \end{bmatrix}).
\end{equation}
To avoid artificial infeasibility/unboundedness \cite{maoSuccessiveConvexificationNonConvex2016} due to linearization, we impose a trust region constraint:
\begin{align} \label{eq:trust_region_constraint}
	\|\mathrm{vec}(D_X\cdot \Delta X_k)\|_\infty &\leq r, \quad k \in [N{-}1]
\end{align}
where \( r > 0 \) is the trust region radius, and \( D_X \) is a scaling matrix.
Trust regions on \( \mu_k \) and \( v_k \) are not necessary, since they only appear in the convex constraints.

The objective function is augmented with a penalty term using the augmented Lagrangian method. The penalty function for the constraint violations is defined as:
\begin{equation} \label{eq:penalty_function}
	P(\bm{\xi}; w, \bm{\lambda}) = \sum_{k=0}^{N-1} \lambda_k^\top \xi_k + \frac{w}{2} \|\xi_k\|_2^2.
\end{equation}
$\bm{\lambda} := \{\lambda_k\}_{k=0}^{N-1}$ with \( \lambda_k \in \mathbb{R}^{n(n+1)/2} \) are Lagrange multipliers; \( w > 0\) is the penalty weight; \(\bm{\xi} := \{\xi_k\}_{k=0}^{N-1}\) are the slack variables from \cref{eq:linearized_dynamics}. Define \(\bm{z} := \{\bm{v}, \bm{L}, \bm{\mu}, \bm{S} \} \). The augmented objective function is then:
\begin{equation} \label{eq:objective_augmented}
	\mathcal{L}(\bm{z}, \bm{\xi}; w, \bm{\lambda}) := 
	J(\bm{z}) + P(\bm{\xi}; w, \bm{\lambda}).
\end{equation}

The convex subproblem is given by:
\begin{equation} \label{eq:subproblem}
	\minimize_{\bm{v}, \bm{L}, \bm{\mu}, \bm{S}, \bm{\xi}} \cref{eq:objective_augmented} \quad \text{s.t.} \quad 
	\text{\cref{eq:S_structure_constraint,eq:mean_dynamics,eq:linearized_dynamics,eq:initial_distributional_constraint,eq:terminal_distributional_constraint,eq:linear_chance_constraint_deterministic,eq:norm_chance_constraint_deterministic,eq:trust_region_constraint}}.
\end{equation}
Algorithm \ref{alg:sqrt_qr_covariance_steering} shows the solution process. Every iteration, the subproblem \cref{eq:subproblem} is solved. Then, the following quantities determine the step acceptance and parameter updates:

{
\begin{align}
	\widetilde{\bm{\xi}}(\bm{z}) & := \begin{bmatrix}
		\mathrm{vectril}(S_1 - (\mathrm{qr}(X_1^{\top}))^\top) \\
		\vdots \\
		\mathrm{vectril}(S_N - (\mathrm{qr}(X_N^{\top}))^\top)
	\end{bmatrix} \\
	\mathcal{J}(\bm{z}, w, \bm{\lambda}) &:= J(\bm{z}) + P({\widetilde{\bm{\xi}}}(\bm{z}); w, \bm{\lambda}) \\
	\Delta J^{(i)} &:= \mathcal{J}(\bar{\bm{z}}; w^{(i)}, \bm{\lambda}^{(i)}) - \mathcal{J}(\bm{z}^*; w^{(i)}, \bm{\lambda}^{(i)}) \label{eq:delta_J} \\
	\Delta L^{(i)} &:= \mathcal{J}(\bar{\bm{z}}; w^{(i)}, \bm{\lambda}^{(i)}) - \mathcal{L}(\bm{z}^*, \bm{\xi}^*; w^{(i)}, \bm{\lambda}^{(i)}) \label{eq:delta_L}\\
	\chi^{(i)} &:= \|\widetilde{\bm{\xi}}(\bm{z})\|_2 \label{eq:chi}
\end{align}
}%
where the superscript $(i)$ denotes the SCP iteration number, and $^*$ denotes the solution obtained at the current iteration. Here, $\widetilde{\bm{\xi}}$ is the vector of infeasibility of the nonlinear square root covariance dynamics, $\mathcal{J}$ is the augmented cost penalizing the nonlinear infeasibility,  $\Delta J^{(i)}$ is the actual cost reduction, $\Delta L^{(i)}$ is the predicted cost reduction from the linearized subproblem, and $\chi^{(i)}$ measures the norm of nonlinear infeasibility. The step acceptance ratio $\rho^{(i)} = \Delta J^{(i)} / \Delta L^{(i)}$ is computed, where $\Delta L^{(i)} \geq 0$ is guaranteed by the convexity of the subproblem \cite{oguriSuccessiveConvexificationFeasibility2023a}. If $\Delta L = 0$, one can set $\rho^{(i)} = 1$. 
Readers are referred to \cite{oguriSuccessiveConvexificationFeasibility2023a} for more details of $\texttt{SCvx*}$, including its proof of convergence to a local minimum.

\begin{algorithm}[t]
	\caption{Square Root-Factorized CS via \texttt{SCvx*}}
	\label{alg:sqrt_qr_covariance_steering}
	\begin{algorithmic}[1]
	\Require Initial guess $(\bar{\bm{L}}, \bar{\bm{S}})$, SCP parameters ($\epsilon_{\mathrm{feas}}$, $\epsilon_{\mathrm{opt}}$, $\rho_0$, $\rho_1$, $\rho_2$, $\alpha_1$, $\alpha_2$, $\beta$, $\gamma$, $r_{\mathrm{min}}$, $r_{\mathrm{max}}$, $r_{\mathrm{init}}$, $w_{\mathrm{init}}$)
	\State $i {\gets} 1$, $r^{(1)} {\gets} r_{\mathrm{init}}$, $w^{(1)} {\gets} w_{\mathrm{init}}$, $\delta^{(1)} {\gets} \infty$, $\bm{\lambda}^{(1)} {\gets} \bm{0}$
	\State Compute reference QR matrices using \cref{eq:reference_update}
	\While{$|\Delta J^{(i)}| > \epsilon_{\mathrm{opt}}$ or $\chi^{(i)} > \epsilon_{\mathrm{feas}}$}
		\State $(\bm{v}^*, \bm{L}^*, \bm{\mu}^*, \bm{S}^*, \bm{\xi}^*) \gets \arg\min$ \cref{eq:subproblem}
		\State Compute $\Delta J^{(i)}$, $\Delta L^{(i)}$, and $\chi^{(i)}$ using \cref{eq:delta_J,eq:delta_L,eq:chi}
		\State Compute step acceptance ratio: $\rho^{(i)} \gets \Delta J^{(i)} / \Delta L^{(i)}$
		\If{$\rho^{(i)} \geq \rho_0$} \Comment{Accept step}
			\State $(\bar{\bm{v}}, \bar{\bm{L}}, \bar{\bm{\mu}}, \bar{\bm{S}}) \gets (\bm{v}^*, \bm{L}^*, \bm{\mu}^*, \bm{S}^*)$
			\State Update reference QR matrices using \cref{eq:reference_update}
			\If{$|\Delta J^{(i)}| < \delta^{(i)}$}
				\State Set $\bm{\lambda}^{(i+1)}$, $\delta^{(i+1)}$, $w^{(i+1)}$ with \cite[Eq. (3)]{oguriSuccessiveConvexificationFeasibility2023a}
			\Else
			\State $(\bm{\lambda}^{(i+1)}, \delta^{(i+1)}, w^{(i+1)}) \gets (\bm{\lambda}^{(i)}, \delta^{(i)}, w^{(i)})$
			\EndIf
		\Else
			\State $(\bm{\lambda}^{(i+1)}, \delta^{(i+1)}, w^{(i+1)}) \gets (\bm{\lambda}^{(i)}, \delta^{(i)}, w^{(i)})$
		\EndIf
		\State Update $r^{(i+1)}$ with \cite[Eq. (17)]{oguriSuccessiveConvexificationFeasibility2023a}
		\State $i \gets i + 1$
	\EndWhile
	\State Recover feedback gains: $K_k^* \gets L_k^* (S_k^*)^{-1}$ for $k \in [N{-}1]$
	\State \Return $\bm{v}^*, \bm{K}^*, \bm{\mu}^*, \bm{S}^*$
	\end{algorithmic}
	\end{algorithm}

\section{Optimality of the Converged Solution}

In this section, we connect the optimality of the solution obtained from \cref{alg:sqrt_qr_covariance_steering} (in the $\mathbb{L}_+^n$ coordinates) to that obtained from solving it defining the covariance matrix as variables, as in \cite{liuOptimalCovarianceSteering2025,pilipovskyComputationallyEfficientChance2024} (in the $\mathbb{S}_+^n$ coordinates).

 Consider two constrained optimization problems on manifolds $\mathcal{M}_A$ and $\mathcal{M}_B$ with variables $x \in \mathcal{M}_A$ and $y \in \mathcal{M}_B$:%
\begin{equation*}
\text{(A)} \ \minimize_{x \in \mathcal{X}} \ f(x) \qquad
\text{(B)} \ \minimize_{y \in \mathcal{Y}} \ f(T(y))
\end{equation*}
where $T: \mathcal{M}_B \rightarrow \mathcal{M}_A$ is a diffeomorphism such that $T(\mathcal{M}_B) = \mathcal{M}_A$, with equality constraints $g_i, i \in \mathcal{I}$ and inequality constraints $h_j, j \in \mathcal{J}$:
\begin{align}
	\mathcal{X} &:= \{ x \in \mathcal{M}_A \ |\  g_i(x) = 0, i \in \mathcal{I}; \ h_j(x) \leq 0 \in \mathcal{J} \} \nonumber \\ 
	\mathcal{Y} &:= \{ y \in \mathcal{M}_B \ |\  g_i(T(y)) = 0, i \in \mathcal{I}; \ h_j(T(y)) \leq 0\in \mathcal{J} \} \nonumber 
\end{align}
From the definition, $\mathcal{X} = T(\mathcal{Y})$.

\begin{proposition}[Invariance of Local Optimality via a Diffeomorphism]
\label{prop:global_optimality_diffeomorphism}
Let $y^*$ be a local minimum of (B). Then $x^* = T(y^*)$ is a local minimum of (A).
\end{proposition}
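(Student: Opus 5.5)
The proof is a direct topological argument. We use only that $T$ is a homeomorphism onto $\mathcal{M}_A$ with continuous inverse; smoothness is not needed for this direction. Let $y^*$ be a local minimum of (B). Then there is an open neighborhood $V \subseteq \mathcal{M}_B$ of $y^*$ such that $f(T(y^*)) \le f(T(y))$ for all $y \in V \cap \mathcal{Y}$. Set $x^* := T(y^*)$. Since $y^* \in \mathcal{Y}$ and $\mathcal{X} = T(\mathcal{Y})$, the point $x^*$ is feasible for (A).

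Next we transport the neighborhood. Define $U := T(V)$. Because $T$ is a diffeomorphism (in particular bijective with continuous inverse $T^{-1}$), $U = (T^{-1})^{-1}(V)$ is open in $\mathcal{M}_A$ and contains $x^*$.

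Now take any $x \in U \cap \mathcal{X}$ and let $y := T^{-1}(x)$. Then $y \in V$. Feasibility also transfers. Since $x \in \mathcal{X}$, we have $g_i(T(y)) = g_i(x) = 0$ and $h_j(T(y)) = h_j(x) \le 0$, so $y \in \mathcal{Y}$ directly from the definition of $\mathcal{Y}$. Bijectivity of $T$ is what guarantees that the preimage of $x$ is a single well-defined point, which is the $y$ we use. Hence $y \in V \cap \mathcal{Y}$, and therefore
\[
f(x) = f(T(y)) \ge f(T(y^*)) = f(x^*).
\]
This holds for every $x \in U \cap \mathcal{X}$, so $x^*$ is a local minimum of (A).

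There is no substantial obstacle. The one point to state carefully is that $T(V)$ is open, which requires continuity of $T^{-1}$. This is exactly where the diffeomorphism (rather than mere continuity or surjectivity) hypothesis enters, together with the surjectivity condition $T(\mathcal{M}_B) = \mathcal{M}_A$ that makes $T^{-1}$ defined on all of $\mathcal{M}_A$. By symmetry, applying the same argument to $T^{-1}$ gives the converse, and the identical argument with $V = \mathcal{M}_B$ shows that global minima correspond as well. That version is what will later be needed to relate the $\mathbb{L}_+^n$ and $\mathbb{S}_+^n$ formulations via the map $S \mapsto SS^\top$.
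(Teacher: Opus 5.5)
Your proof is correct and takes essentially the same route as the paper: transport the open neighborhood through the homeomorphism $T$, pull each $x \in T(V)\cap\mathcal{X}$ back to the unique $y = T^{-1}(x) \in V \cap \mathcal{Y}$, and compare objective values. The only cosmetic difference is that you check $y \in \mathcal{Y}$ directly from the constraint definitions, whereas the paper uses the set identity $T(\mathcal{N}) \cap T(\mathcal{Y}) = T(\mathcal{N} \cap \mathcal{Y})$, which follows from bijectivity of $T$.
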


\begin{proof}
Since $y^*$ is a local minimum of (B), there exists a neighborhood $\mathcal{N}$ around $y^*$ such that
\begin{equation}
\label{eq:local_minimum}
f(T(y)) \geq f(T(y^*)) \quad \forall y \in \mathcal{S}_B
\end{equation}
where $\mathcal{S}_B := N \cap \mathcal{Y}$ \cite[Chapter 12]{nocedalNumericalOptimization2006}.

Since $T$ is a diffeomorphism, it is in particular a homeomorphism. Therefore, $T$ maps the open neighborhood $\mathcal{N}$ to an open set $T(\mathcal{N})$ which contains $T(y^*) = x^*$ \cite[Chapter 12]{munkresTopology2014}.
Defining $\mathcal{S}_A := T(N) \cap \mathcal{X}$,
\begin{equation}
\mathcal{S}_A = T(\mathcal{N}) \cap T(\mathcal{Y}) 
= T(\mathcal{N} \cap \mathcal{Y}) 
= T(\mathcal{S}_B)
\end{equation}
where the second equality holds since $T$ is in particular a bijection \cite[Theorem 12.4]{hammackBookProof2018}.
Therefore, for any $x \in \mathcal{S}_A$, there exists a unique pre-image $y = T^{-1}(x) \in \mathcal{S}_B$. From \cref{eq:local_minimum},
\begin{equation}
f(T(y)) \geq f(T(y^*)),\ \forall y \in \mathcal{S}_B \nonumber
\Rightarrow f(x) \geq f(x^*),\ \forall x \in \mathcal{S}_A. \label{eq:local_optimality_x}
\end{equation}
Hence, \(x^*\) is a local minimum of (A).
\end{proof}

\begin{remark}
	Another way to show \cref{prop:global_optimality_diffeomorphism} is to show that the KKT conditions of (A) and (B) are equivalent. Readers are referred to Proposition 1 of \cite{fatkhullinGlobalSolutionsNonConvex2025}. The assumptions used in the proposition, 1. weak convexity of the objective and constraints and 2. differentiability of the coordinate transformation, are satisfied by the problem statement in \cref{eq:nonconvex_problem_statement}.
\end{remark}

Next, we apply this result to our specific problem. The following proposition is used in the proof.
\begin{proposition}[Proposition 2 of \cite{linRiemannianGeometrySymmetric2019}]
	\label{prop:cholesky_diffeomorphism}
	The Cholesky map \(\mathcal{L} : \mathbb{S}^n_+ \rightarrow \mathbb{L}_+^n\) is a diffeomorphism.
\end{proposition}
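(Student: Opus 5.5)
The plan is to work with the inverse of the Cholesky map rather than the map itself, because the inverse is a polynomial. Define $\Phi : \mathbb{L}_+^n \rightarrow \mathbb{S}_+^n$ by $\Phi(S) = S S^\top$. Both spaces are open subsets of real vector spaces of the same dimension $n(n+1)/2$:
\begin{itemize}
\item $\mathbb{L}_+^n$ is open in $\mathbb{L}^n$, since it is cut out by the strict inequalities $\mathrm{diag}(S) > 0$.
\item $\mathbb{S}_+^n$ is open in the space of symmetric matrices.
\end{itemize}
So both are smooth manifolds, and it suffices to show three things: $\Phi$ is a smooth bijection, its differential is everywhere invertible, and its inverse is exactly $\mathcal{L}$.

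\textbf{Bijectivity.} If $S \in \mathbb{L}_+^n$, then $S$ is invertible, so $S S^\top \succ 0$ and $\Phi$ lands in $\mathbb{S}_+^n$. For every $P \in \mathbb{S}_+^n$ the Cholesky factor exists and is unique in $\mathbb{L}_+^n$ (the result cited in the notation section, \cite[Theorem 4.2.7]{golubMatrixComputations2013}). Hence $\Phi$ is a bijection with $\Phi^{-1} = \mathcal{L}$. Smoothness of $\Phi$ is immediate, since its entries are quadratic polynomials in the entries of $S$.

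\textbf{Invertibility of the differential.} This is the main step. The differential is
\begin{equation*}
\dd\Phi_S[\Delta] = \Delta S^\top + S \Delta^\top, \qquad \Delta \in \mathbb{L}^n.
\end{equation*}
Since the domain and codomain of $\dd\Phi_S$ have equal dimension, injectivity is enough. Suppose $\Delta S^\top + S\Delta^\top = 0$. Multiply on the left by $S^{-1}$ and on the right by $S^{-\top}$, and set $M := S^{-1}\Delta$. This gives $M + M^\top = 0$. Because $S^{-1}$ and $\Delta$ are both lower triangular, $M$ is lower triangular, so $M^\top$ is upper triangular. From $M = -M^\top$, $M$ is therefore both lower and upper triangular, hence diagonal. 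A diagonal skew-symmetric matrix is zero, so $M = 0$ and $\Delta = S M = 0$. The one point to check carefully here is that $S^{-1}$ stays lower triangular; this holds because $S$ is lower triangular with positive diagonal.

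\textbf{Smoothness of the inverse.} By the inverse function theorem, $\Phi$ is a local diffeomorphism around every point. Because $\Phi$ is also a global bijection, the local smooth inverses must agree with the global inverse $\mathcal{L}$. Thus $\mathcal{L}$ is smooth, and $\mathcal{L}$ is a diffeomorphism.

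As a cross-check, smoothness of $\mathcal{L}$ can also be read off the Cholesky recursion. Each entry of $S$ is produced by rational operations together with square roots of the pivots $P_{jj} - \sum_{k<j} S_{jk}^2$. These pivots are strictly positive on $\mathbb{S}_+^n$, so every step is a smooth operation.
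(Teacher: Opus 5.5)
Your proof is correct. The paper gives no proof of its own; it imports the result from \cite{linRiemannianGeometrySymmetric2019}. As far as I recall, the argument there is the direct one. Bijectivity comes from existence and uniqueness of the Cholesky factor. Smoothness of $\mathcal{L}$ is read off from the explicit Cholesky formulas, which only involve square roots of, and divisions by, pivots that are strictly positive on $\mathbb{S}^n_+$. Smoothness of the inverse $S \mapsto S S^\top$ holds because it is polynomial. That is essentially your closing ``cross-check'' paragraph promoted to the main proof.

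Your primary route never analyzes the recursion. It gets smoothness of $\mathcal{L}$ from the inverse function theorem applied to the polynomial map $\Phi$, so all the work sits in the nonsingularity of $\Delta \mapsto \Delta S^\top + S\Delta^\top$ on $\mathbb{L}^n$. Your lower-triangular versus skew-symmetric argument settles that cleanly, together with the matching dimension $n(n+1)/2$ and the bijectivity-based gluing of local inverses.

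The direct route is more elementary, since it needs no inverse function theorem and exhibits $\mathcal{L}$ as an explicit composition of smooth operations. Yours is shorter and coordinate-free, and carried one step further it also gives the derivative of the Cholesky map. Solving $M + M^\top = S^{-1} X S^{-\top}$ for lower-triangular $M$ forces $M$ to be the strictly lower part of $S^{-1} X S^{-\top}$ plus half its diagonal, so $\mathrm{d}\mathcal{L}_P[X] = S M$ with $S = \mathcal{L}(P)$. That explicit inverse of $\mathrm{d}\Phi_S$ could replace your injectivity-plus-dimension-count step outright. I believe it is the same differential formula that appears later in \cite{linRiemannianGeometrySymmetric2019}.
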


\begin{theorem} \label{th:local_optimality}
	\cref{alg:sqrt_qr_covariance_steering} converges to a locally optimal solution of the cc-CS problem. The local optimality is invariant to whether the problem is represented with the full covariance matrix or its Cholesky factor. 
\end{theorem}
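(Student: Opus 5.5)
The plan has two parts: first establish that \cref{alg:sqrt_qr_covariance_steering} converges to a local minimum of \cref{eq:nonconvex_problem_statement}, then transport that local minimum to the full-covariance formulation with \cref{prop:global_optimality_diffeomorphism}.

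\textbf{Convergence.} I would invoke the convergence guarantee of \texttt{SCvx*} \cite{oguriSuccessiveConvexificationFeasibility2023a}. The objective \cref{eq:objective_deterministic} and all constraints other than \cref{eq:square_root_covariance_propagation} are convex. The nonconvex equality \cref{eq:square_root_covariance_propagation} is continuously differentiable in $(S_k, L_k)$ wherever $X_{k+1}$ has full rank, since the QR factor is smooth on full-rank matrices with derivative $\dd R$. By \cref{lemma:rank_X_n_iff_P_succ_0} and \cref{assump:positive_definite_state}, full rank holds on the relevant set. With these smoothness hypotheses verified, \texttt{SCvx*} returns a point satisfying \cref{eq:square_root_covariance_propagation}, so $\widetilde{\bm{\xi}}=0$, and this point is a KKT (local) minimum of \cref{eq:nonconvex_problem_statement}. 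This is problem (B), posed in the $\mathbb{L}_+^n$ coordinates.

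\textbf{Invariance.} Set $\mathcal{M}_B = \mathbb{L}_+^n$ for each $S_k$ and $\mathcal{M}_A=\mathbb{S}_+^n$ for each $P_k$. The remaining variables $v_k,\mu_k$ are left unchanged, and $L_k$ is handled as discussed below. Take $T$ to be the product of the maps $S_k\mapsto S_kS_k^\top$, which is the inverse of the Cholesky map. By \cref{prop:cholesky_diffeomorphism} each factor is a diffeomorphism, so the product with identities is a diffeomorphism onto its image. Next I would check that every function in (B) has the form $f\circ T$, $g_i\circ T$, $h_j\circ T$ for functions defined on the covariance variables:
\begin{itemize}
\item \cref{eq:square_root_covariance_propagation} becomes \cref{eq:full_covariance_propagation} by \cref{lemma:sqrt_propagation};
\item $\|S_k^\top\alpha_j\|_2=\sqrt{\alpha_j^\top P_k\alpha_j}$;
\item $\|W S_k\|_2=\lambda_{\max}(WP_kW^\top)^{1/2}$;
\item $\trace(Q_kS_kS_k^\top)=\trace(Q_kP_k)$;
\item the terminal constraint \cref{eq:terminal_distributional_constraint} is equivalent to $P_N\preceq P_{\mathrm{fin}}$.
\end{itemize}
The structure constraint \cref{eq:S_structure_constraint} is exactly membership in $\mathcal{M}_B$, with the boundary excluded by \cref{assump:positive_definite_state}. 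Applying \cref{prop:global_optimality_diffeomorphism} then shows that $T(y^*)$ is a local minimum of the full-covariance problem. Since $T^{-1}$ is also a diffeomorphism, the same proposition applied in reverse gives the converse, so the set of local minima is identical in the two representations.

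\textbf{Main obstacle.} The delicate step is the control variable. Here $L_k$ is not itself a Cholesky factor, and the constraints and cost depend on $L_k$ only through $L_kL_k^\top=\cov{u_k}$ and through $A_kS_k+B_kL_k$. The full-covariance formulations \cite{liuOptimalCovarianceSteering2025} instead use the variables $(P_k, U_k=K_kP_k, Y_k)$. I would use the bijection $L_k = K_kS_k$, so $K_k=L_kS_k^{-1}$, together with $U_k=L_kS_k^\top$; for fixed $S_k$ this is a linear isomorphism, hence jointly smooth with a smooth inverse. I would therefore extend $T$ to $(S_k,L_k)\mapsto(S_kS_k^\top, L_kS_k^\top)$, which is a diffeomorphism onto $\mathbb{S}_+^n\times\R^{m\times n}$, with $Y_k=U_kP_k^{-1}U_k^\top$ imposed at equality. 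If instead the comparison problem keeps $Y_k\succeq U_kP_k^{-1}U_k^\top$ relaxed, I would rely on the tightness result cited in the paper so that equality holds at local minima, and state the invariance for the unrelaxed problem.
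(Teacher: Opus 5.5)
Your proposal is correct and follows the paper's proof. Both take local optimality of the \texttt{SCvx*} limit in the Cholesky-factor coordinates and transport it to the full-covariance formulation through \cref{prop:global_optimality_diffeomorphism}, using the inverse Cholesky map of \cref{prop:cholesky_diffeomorphism}. The only difference is how the control variable is handled: the paper takes (B) to be the problem before the substitution $L_k = K_kS_k$ and leaves $K_k$ untouched by $T$, whereas you fold that substitution into $T$ via $(S_k,L_k)\mapsto(S_kS_k^\top, L_kS_k^\top)$, which has the small advantage of directly covering the $(S_k,L_k)$ variables the algorithm actually uses.
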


\begin{proof}
	Let (A) from \cref{prop:global_optimality_diffeomorphism} be the determinisic formulation method in \cite{liuOptimalCovarianceSteering2025}. In this formulation, variable $x \in \mathcal{M}_A = \mathbb{S}_+^n \times \cdots \times \mathbb{S}_+^n \times \R^{n\times m} \times \cdots \times \R^{n \times m}$ where the first $N+1$ spaces are the PD matrices $P_k$ and the next $N$ spaces are the feedback gains $K_k$. 
	Let (B) from \cref{prop:global_optimality_diffeomorphism} be \cref{eq:nonconvex_problem_statement} before the change of variables $L_k = K_k S_k$. In this formulation, variable $y \in \mathcal{M}_B = \mathbb{L}_+^n \times \cdots \times \mathbb{L}_+^n \times \R^{n\times m} \times \cdots \times \R^{n \times m}$, where the first $N+1$ spaces are $S_k$ and the next $N$ spaces are the feedback gains $K_k$. (We can ignore the mean variables $\bm{v}, \bm{\mu}$ as they appear exactly in the same way in either problem.)
	(B) is solved via \cref{alg:sqrt_qr_covariance_steering}, and the diffeomorphism $T: \mathcal{M}_B \rightarrow \mathcal{M}_A$ maps each of the matrices $\mathbb{S}_+^n$ to $\mathbb{L}_+^n$ via the inverse of the Cholesky map \(\mathcal{L} : \mathbb{S}^n_+ \rightarrow \mathbb{L}_+^n\) and does not modify the feedback gains $K_k$. The inverse of a diffeomorphism is also a diffeomorphism.
	Then, the proof follows from the local optimality of the \texttt{SCvx*} algorithm \cite[Theorem 3]{oguriSuccessiveConvexificationFeasibility2023a} and \cref{prop:cholesky_diffeomorphism}. 
\end{proof}

\begin{theorem} \label{th:global_optimality}
	When the CCs are not present and the objective takes the EoQ form of \cref{eq:objective_function}, the solution obtained by \cref{alg:sqrt_qr_covariance_steering} is globally optimal.
\end{theorem}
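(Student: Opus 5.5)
Our plan is to transfer a known global-optimality result from the full-covariance formulation to ours through the machinery of \cref{th:local_optimality}. The intended chain is: \texttt{SCvx*} output $\Rightarrow$ local minimum of (B) $\Rightarrow$ local minimum of (A) $\Rightarrow$ global minimum of (A) $\Rightarrow$ global minimum of (B).

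\textbf{Step 1.} By \cref{th:local_optimality}, \cref{alg:sqrt_qr_covariance_steering} converges to a point $y^*=(\bm{S}^*,\bm{K}^*)$ that is a local minimum of (B). This formulation has no CCs and uses the EoQ objective. By \cref{prop:global_optimality_diffeomorphism,prop:cholesky_diffeomorphism}, the image $x^*=T(y^*)=(\bm{P}^*,\bm{K}^*)$ is then a local minimum of (A). Here (A) is the formulation of \cite{liuOptimalCovarianceSteering2025} with variables $P_k\in\mathbb{S}_+^n$ and $K_k$, objective $\sum_k \mu_k^\top Q_k\mu_k+v_k^\top R_kv_k+\trace(Q_kP_k+R_kK_kP_kK_k^\top)$, and the dynamics \cref{eq:full_covariance_propagation}.

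\textbf{Step 2.} This is the main step: show that every local minimum of (A) in the unconstrained EoQ case is global. We would invoke the lossless convexification of \cite{liuOptimalCovarianceSteering2025}. Substitute $U_k=K_kP_k$ and relax $Y_k\succeq U_kP_k^{-1}U_k^\top$ (a Schur-complement LMI), so the relaxed problem is a convex SDP. That paper shows its KKT conditions force the relaxation to be tight, using $R_k\succ0$, which makes the Lagrangian's partial w.r.t.\ $Y_k$ positive definite.

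We then need: at a local minimum $x^*$ of (A), the KKT conditions of (A) hold. This requires a constraint qualification, which we would obtain from \cref{assump:positive_definite_state} together with the affine-in-$P$ structure of the constraints. Lifting the multipliers of (A) to the relaxed SDP, with $Y_k^*=K_k^*P_k^*K_k^{*\top}$ and a zero multiplier on the slack of the LMI, yields a KKT point of the convex SDP. By convexity, that KKT point is a global minimizer of the relaxation. Since the relaxation's optimal value lower-bounds (A) and $x^*$ attains it, $x^*$ is a global minimum of (A).

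The delicate part is verifying that the lifted multipliers satisfy stationarity with respect to $Y_k$ and the LMI. We expect it to go through by matching the stationarity of (A) in $K_k$ with the complementarity condition on the Schur block, mirroring \cite{liuOptimalCovarianceSteering2025}. If this lifting proves awkward, the fallback is the observation in the remark after \cref{prop:global_optimality_diffeomorphism}: by \cite[Proposition 1]{fatkhullinGlobalSolutionsNonConvex2025}, KKT points of (A) and (B) correspond, and a smooth change of variables from the convex SDP then shows that any KKT point of (B) is globally optimal.

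\textbf{Step 3.} Conclude globally. $T$ is a bijection between the feasible sets ($\mathcal{X}=T(\mathcal{Y})$) and preserves the objective. Hence for every $y\in\mathcal{Y}$ we have $f(T(y))\ge f(x^*)=f(T(y^*))$, so $y^*$ is a global minimizer of (B). Finally, the change of variables $L_k=K_kS_k$ is a bijection given $S_k$ invertible, so the solution in the original variables of \cref{eq:nonconvex_problem_statement} is also globally optimal.
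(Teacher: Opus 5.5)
Your chain (\cref{th:local_optimality} gives a local minimum of (B), hence of (A); that minimum is global for (A); then transfer back to (B)) is the paper's. The paper's proof is only the one-liner ``the unconstrained EoQ problem is convex by Liu et al., and a local optimum of a convex problem is global.'' You correctly notice that (A) in the $(P_k,K_k)$ coordinates is not convex. You also see that lossless convexification only says the relaxation's optima are tight, i.e.\ it equates global values and says nothing about other local minima of (A). But the KKT lifting you propose to bridge this does not go through as written.

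First, a zero multiplier on the LMI (or on its slack) is impossible. Stationarity in $Y_k$ would then force $R_k + B_k^\top \Lambda_{k+1} B_k = 0$, where $\Lambda_{k+1}$ is the multiplier of the $P_{k+1}$ dynamics. At the tight point the LMI matrix has null space spanned by the columns of $E_k := \begin{bmatrix} I_m & -K_k \end{bmatrix}^\top$. Complementary slackness therefore forces $M_k = E_k W_k E_k^\top$, and stationarity in $Y_k$ gives $W_k = R_k + B_k^\top \Lambda_{k+1} B_k$. With this choice, stationarity in $U_k$ and $P_k$ reduces exactly to (A)'s stationarity in $K_k$ and $P_k$, so stationarity is not the delicate part.

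What is actually missing is dual feasibility $M_k \succeq 0$, i.e.\ $R_k + B_k^\top\Lambda_{k+1}B_k \succeq 0$. First-order KKT of (A) does not imply this, because $\Lambda_{k+1}$ is a free symmetric matrix. That information sits in second-order conditions: the Hessian of (A)'s Lagrangian is $2\sum_k \mathrm{tr}(W_k\,\delta K_k P_k\,\delta K_k^\top)$ once the $K$--$P$ cross terms cancel by stationarity. These conditions yield $W_k \succeq 0$ directly only when every $\delta K_k$ is a critical direction, e.g.\ when the terminal constraint is inactive.

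The CQ claim is also loose. The dynamics are bilinear in $(P_k,K_k)$; the equalities are harmless, since each has Jacobian $-I$ in $P_{k+1}$, but the terminal LMI needs a Robinson-type condition. Finally, the fallback fails: the SDP has the extra variable $Y_k$ and a relaxed constraint, so it is not a diffeomorphic image of (A) or (B).

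A multiplier-free way to close Step~2 exploits the fact that the terminal constraint here is $P_N \preceq P_{\mathrm{fin}}$. Let $(P,U,Y)$ be feasible for Liu's relaxation and close to the lift $z^* = (P^*, K^*P^*, K^*P^*K^{*\top})$ of $x^*$. Set $K_k = U_kP_k^{-1}$ and $\Delta_k = Y_k - U_kP_k^{-1}U_k^\top \succeq 0$, so that $P_{k+1} = (A_k+B_kK_k)P_k(A_k+B_kK_k)^\top + B_k\Delta_kB_k^\top + G_kG_k^\top$. Let $P'_k$ follow the same recursion from $P'_0 = P_{\mathrm{init}}$ but without the $B_k\Delta_kB_k^\top$ term. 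Then:
\begin{itemize}
\item by induction $P'_k \preceq P_k$, so $P'_N \preceq P_{\mathrm{fin}}$ and $(P',K)$ is feasible for (A);
\item its cost is at most $\sum_k \mathrm{tr}(Q_kP_k) + \mathrm{tr}(R_kY_k)$, because $Q_k \succeq 0$, $K_k^\top R_k K_k \succeq 0$ and $\mathrm{tr}(R_k\Delta_k) \geq 0$;
\item $(P',K) \to (P^*,K^*)$ as $(P,U,Y) \to z^*$.
\end{itemize}
Hence $z^*$ is a local, and therefore global, minimizer of the convex relaxation. Every feasible point of (A) lifts to a feasible point of the relaxation at equal cost, so $x^*$ is globally optimal for (A), and your Steps~1 and~3 finish the proof.
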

\begin{proof}
	The unconstrained CS problem with EoQ cost is a convex optimization problem \cite{liuOptimalCovarianceSteering2025}. Since a local optimum of a convex problem is also a global optimum, from \cref{th:local_optimality}, the solution obtained from \cref{alg:sqrt_qr_covariance_steering} is globally optimal.
\end{proof}
\section{Numerical Examples}
For all experiments, we use MATLAB on a MacBook Pro with an M4 Max processor with 24GB of RAM, and  YALMIP \cite{lofbergYALMIPToolboxModeling2004} and MOSEK \cite{mosekapsMOSEKOptimizationToolbox2025} to solve SDPs. 
Code is available at {\small \url{https://github.com/naoya-kumagai/sqrt-cs-release/}}.
For the proposed method, the initial guess \( \bar{\bm{S}} \) is obtained by the covariance matrix interpolation technique from \cite{linRiemannianGeometrySymmetric2019}. The initial guess \( \bar{\bm{L}} \) is set to zero.


\subsection{Optimality and Scalability for the Unconstrained Case} \label{sec:comparison_unconstrained}
Here, we compare the horizon-length scalability and solution optimality of our method against Liu et al. \cite{liuOptimalCovarianceSteering2025} and Okamoto and Tsiotras \cite{okamotoOptimalStochasticVehicle2019}. We consider a three-dimensional double integrator with horizon of \(T = 3\) seconds, and vary the horizon length from 10 to 160 (so the step size is \(\Delta t = T / N\)). 
The time-invariant dynamics are given by
\begin{equation} \label{eq:double_integrator_equations}
	A = \smqty[
		I_3 & \Delta t I_3 \\
		0_{3 \times 3} & I_3
	],\ 
	B = \smqty[
		0.5 \Delta t^2 I_3 \\
		\Delta t I_3
	],\ 
	G = \sqrt{q \Delta t} I_6
\end{equation}
with \( q = 0.05 \) being the process noise spectral density.
The initial and terminal state covariance are \(I_6\) and \(0.5 I_6\), respectively. We use an EoQ cost with \(Q = 0.1 I_6\) and \(R = I_3\).
For our method, we set the convergence and feasibility tolerance to \( \epsilon_{\mathrm{opt}} = \epsilon_{\mathrm{feas}} = 10^{-4}\) and scaling matrix \(D_X\) to identity. To time the solutions, we perform 5 trials for each case. The results are shown in \cref{fig:comparison_of_methods}.

For this unconstrained problem, \cite{liuOptimalCovarianceSteering2025} offers fast and optimal solutions. The computational complexity of \cite{okamotoOptimalStochasticVehicle2019} grows quickly, which matches the observations in \cite{rapakouliasDiscreteTimeOptimalCovariance2023}. Due to its specific feedback policy, it also compromises optimality. In comparison, our approach has moderate computational complexity, with rate of growth similar to \cite{liuOptimalCovarianceSteering2025}. The optimality empirically validates \cref{th:global_optimality}. The original cc-CS method \cite{okamotoOptimalCovarianceControl2018} using state history feedback showed much more rapid growth in computational cost and is not included in this comparison.

\begin{figure}[t]
	\centering
	\begin{subfigure}{0.4\linewidth}
		\centering
		\includegraphics[width=\textwidth]{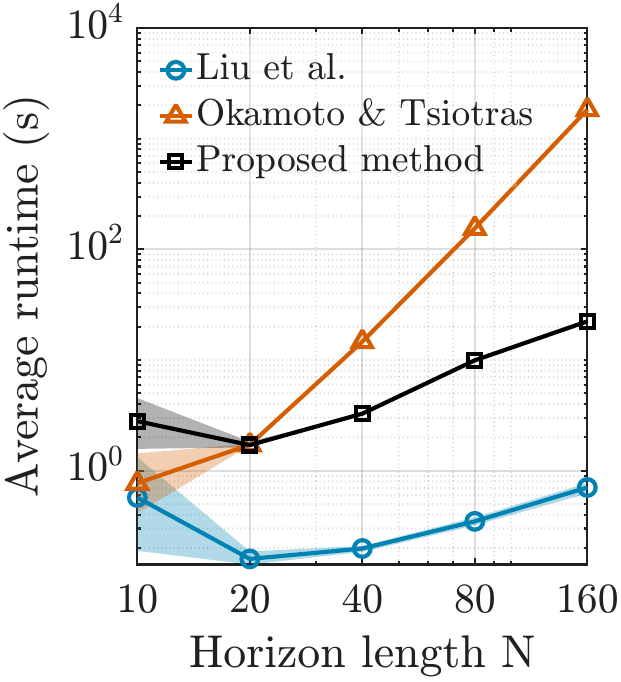}	
	\end{subfigure}%
	\begin{subfigure}{0.4\linewidth}
		\centering
		\includegraphics[width=\textwidth]{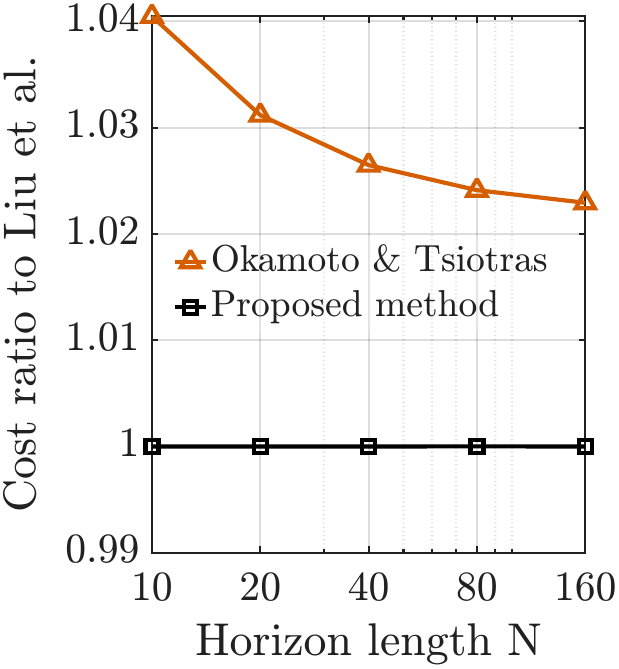}	
	\end{subfigure}
	\caption{Comparisons for the unconstrained case. (\textbf{Left}) Comparison of the average (in solid lines) and the min-max range (in shaded regions) of the solution times between \cite{liuOptimalCovarianceSteering2025,okamotoOptimalStochasticVehicle2019}, and the proposed method (\textbf{Right}) Comparison of the costs for \cite{okamotoOptimalStochasticVehicle2019} and the proposed method, relative to \cite{liuOptimalCovarianceSteering2025}}
	\label{fig:comparison_of_methods}
\end{figure}


\subsection{Optimality and Scalability for the Constrained Case}
Next, we perform a comparison of the same methods for path planning in an environment with a circular obstacle. We conservatively approximate the obstacle as a time-varying hyperplane \cite{dicairanoModelPredictiveControl2012}.
These hyperplanes are obtained by solving the deterministic version of the problem to obtain position histories for the linearization reference.
The setting is a 2D double integrator with $q = 0.005$ in \cref{eq:double_integrator_equations}. Between the boundary positions, there is a circular obstacle with its center at $[5, 0]$ and radius $1.2$. We define the nonconvex state CC: $\P{\norm{p - [5, 0]}_2 \geq 1.2} \geq 1 - 0.005$
for each node, where $p = [x, y]^\top$ is the position vector.
The control is chance-constrained as $\P{u \leq 0.15} \geq 1 - 0.005$, $\P{u \geq -0.15} \geq 1 - 0.005$
for each control component and node. The other parameters are $T = 30$, $ \mu_{\mathrm{init}} = [0, 0, 0, 0]^\top$, $ \mu_{\mathrm{fin}} = [10, 0, 0, 0]^\top$, $P_{\mathrm{init}} =\diag{0.1, 0.1, 0.01, 0.01}$, $P_{\mathrm{fin}} = \diag{0.04, 0.04, 0.01, 0.01}$.

When using Liu's formulation \cite{liuOptimalCovarianceSteering2025}, affine CCs are nonconvex in the variables due to the square root term that appears. Two methods exist for handling this nonconvexity: 1) providing a constant reference covariance matrix for all nodes \cite{rapakouliasDiscreteTimeOptimalCovariance2023}, and 2) rewriting the constraint as a difference-of-convex form, solved using the convex-concave procedure (CCP)\cite{pilipovskyComputationallyEfficientChance2024}. We use the latter method, as the former method conservatively approximates the available solution space; this resulted in \textit{artificial infeasibility}. A naive CCP also encounters this issue. Hence, we apply the Penalty CCP method \cite{lippVariationsExtensionConvex2016} which introduces positive slack variables and augments a penalty term in the objective function. The implementation of the Penalty CCP method follows the algorithm of \cite{lippVariationsExtensionConvex2016}. The iterative procedure ends with similar criteria as the proposed method: 1) the nonconvex CC violation and all slack varialbes are smaller than $\epsilon_{\mathrm{feas}}$ and 2) the difference of the augmented objective function with the previous step is smaller than $\epsilon_{\mathrm{opt}}$. The slack variables are multiplied a factor $\tau$ and added to the objective function. $\tau$ is initially 100 and increased by a factor of 2 at each iteration. For this formulation and the proposed method, we set $\epsilon_{\mathrm{opt}} = 10^{-2}$, $\epsilon_{\mathrm{feas}} = 10^{-4}$. 

\begin{figure}[t]
	\centering
	\begin{subfigure}{0.4\linewidth}
		\centering
		\includegraphics[width=\textwidth]{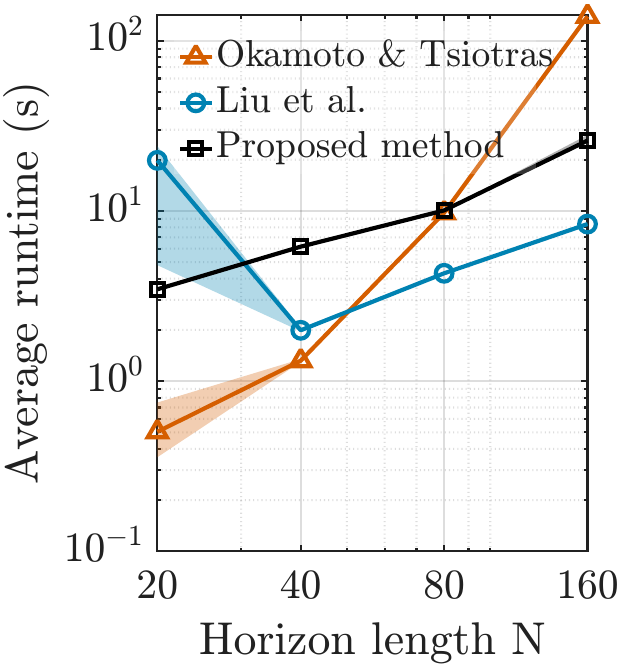}	
	\end{subfigure}%
	\begin{subfigure}{0.4\linewidth}
		\centering
		\includegraphics[width=\textwidth]{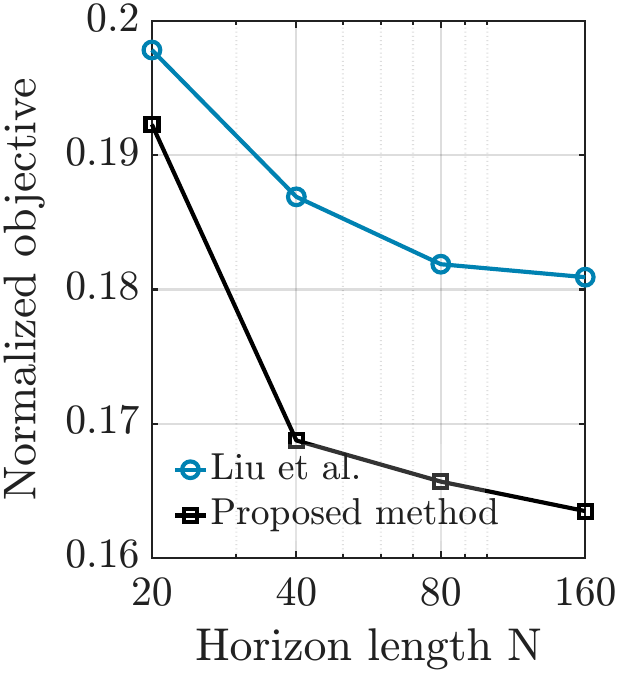}	
	\end{subfigure}%
	\caption{Comparisons for the obstacle-avoidance case. (\textbf{Left}) Average (in solid lines) and the min-max range (in shaded regions) of the solution times. (\textbf{Right}) Objective function divided by the horizon length $N$}
	\label{fig:comparison_of_methods_obstacle}
\end{figure} 

\begin{figure}[t]
	\centering
	\begin{subfigure}{0.5\linewidth}
		\centering
		\includegraphics[width=\textwidth]{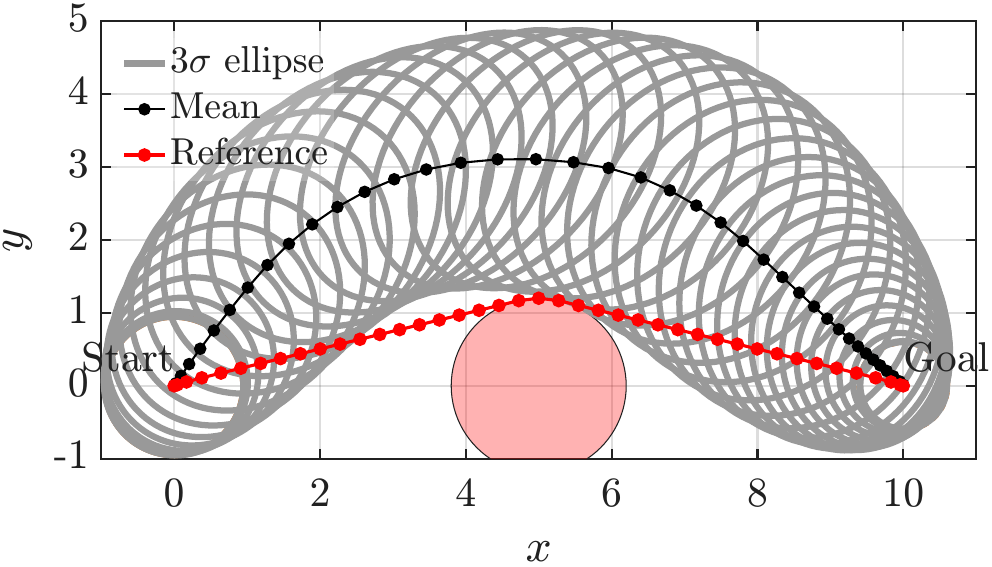}
	\end{subfigure}%
	\begin{subfigure}{0.5\linewidth}
		\centering
		\includegraphics[width=\textwidth]{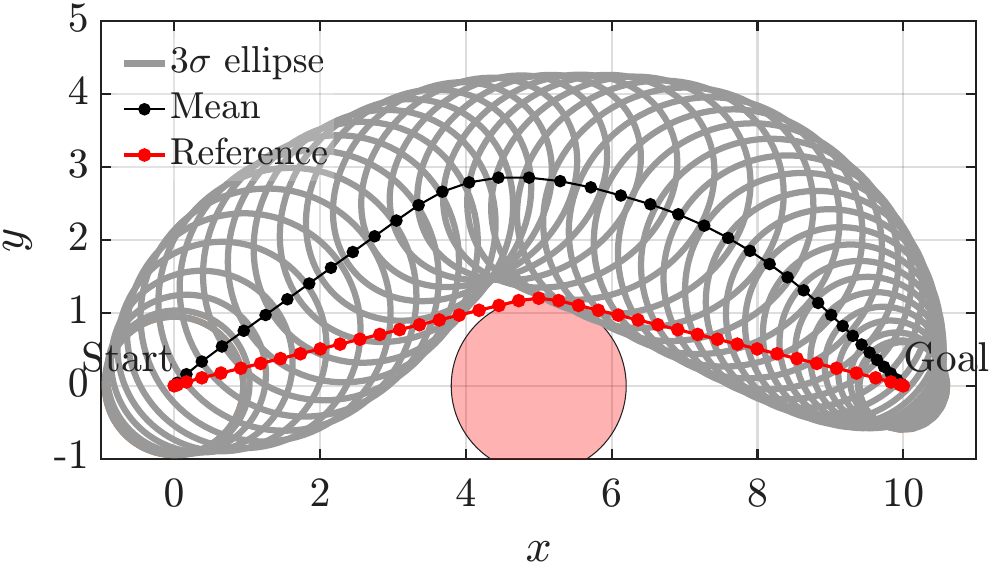}
	\end{subfigure}
	\caption{Obstacle avoidance with Liu's formulation with Penalty CCP (left) and proposed method (right) for the case with $N=40$}
	\label{fig:planning_with_obstacles_comparison}
\end{figure}

The comparison results are shown in \cref{fig:comparison_of_methods_obstacle}. 
For this constrained problem, the proposed method and CCP-based method have similar computational complexity. The proposed method achieves lower cost. \cite{okamotoOptimalStochasticVehicle2019} reported infeasibility for all cases; since MOSEK's interior-point solver required a typical number of iterations before terminating, we include it as a reference.
\cref{fig:planning_with_obstacles_comparison} compares the trajectory and covariance ellipses for the case with $N=40$. The two methods converge to different solutions, with a difference in how large the position covariance is allowed to grow.

\begin{remark}
	The proposed method solves cc-CS with approximately the same computational effort as unconstrained CS.
\end{remark}

\subsection{Numerical Reliability}
Here, we demonstrate the numerical reliability of the proposed method when the variance of some state components are small. We consider a problem similar to \cite{oguriChanceConstrainedControlSafe2024a} which uses CS for a spacecraft rendezvous setting. The problem uses a QoN objective function with $W_k^x = 0$ and $W_k^u = I$ for all $k$ \cref{eq:objective_function} which is common for optimizing the quantile of the spacecraft's fuel consumption. We emphasize that the parameters of uncertainty are rather realistic for a spacecraft rendezvous setting, and not artificial: initial mean position (velocity) $[-3.0, 0.126, 0]^\top$ km ($0_{3 \times 1}$ km/s); initial position (velocity) standard deviation $100$ m ($1.0$ m/s); terminal mean position (velocity) $[0.0, 0.05, 0]^\top$ km ($0_{3 \times 1}$ km/s); terminal position(velocity) standard deviation $10.0$ m ($0.1$ m/s); stochastic acceleration $1.0$ mm/s$^{3/2}$; horizon length $N = 14$; time step $30$ s; chief orbit radius $7228$ km; gravitational parameter $3.986 \times 10^{5}$ km$^3$/s$^2$.

The continuous-time Clohessy-Wiltshire-Hill equations \cite{schaubAnalyticalMechanicsSpace2003} are discretized using zero-order hold control, as described in \cite{oguriChanceConstrainedControlSafe2024a}. 
We use $D_X = \mathrm{diag}(10, 10, 10, 10^3, 10^3, 10^3)$ for the trust region scaling, chosen to make the magnitude of $D_X S_k$ close to unity. We set \( \epsilon_{\mathrm{opt}} = \epsilon_{\mathrm{feas}} = 10^{-5}\).
The proposed method solves the problem in 42 iterations in 5.0 seconds (average of 10 runs). The results are shown in \cref{fig:example_cwh_trajectory}, along with the results of 1000 Monte Carlo simulations. We can observe that the terminal covariance constraint is satisfied with equality, and the Monte Carlo samples are within the 3-sigma bounds.

\begin{figure}[t]
	\centering
	\includegraphics[width=\linewidth]{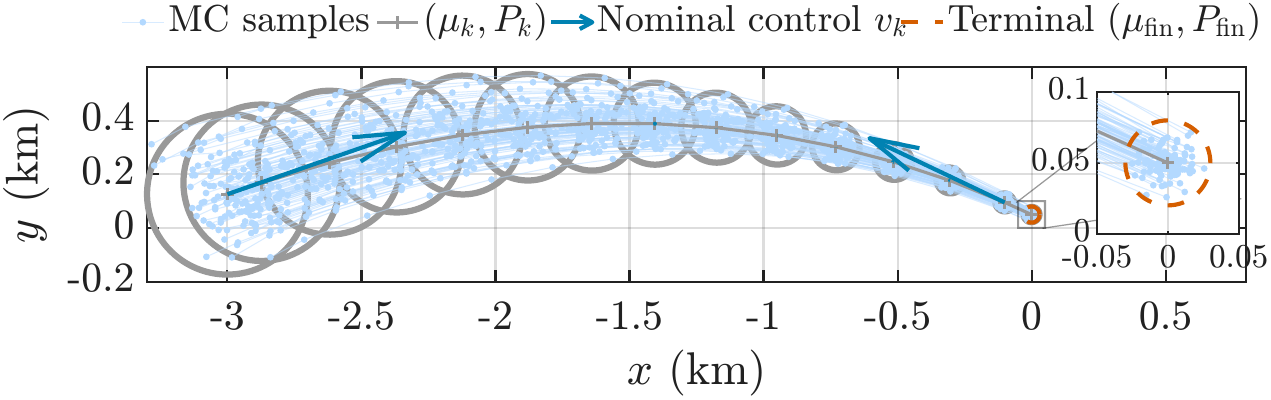}
	\caption{Projection of trajectory onto the $x$-$y$ plane for the spacecraft rendezvous problem, with $3\sigma$ covariance ellipses around the mean trajectory}
	\label{fig:example_cwh_trajectory}
\end{figure}
\begin{figure}[t]
	\centering
	\includegraphics[width=0.75\linewidth]{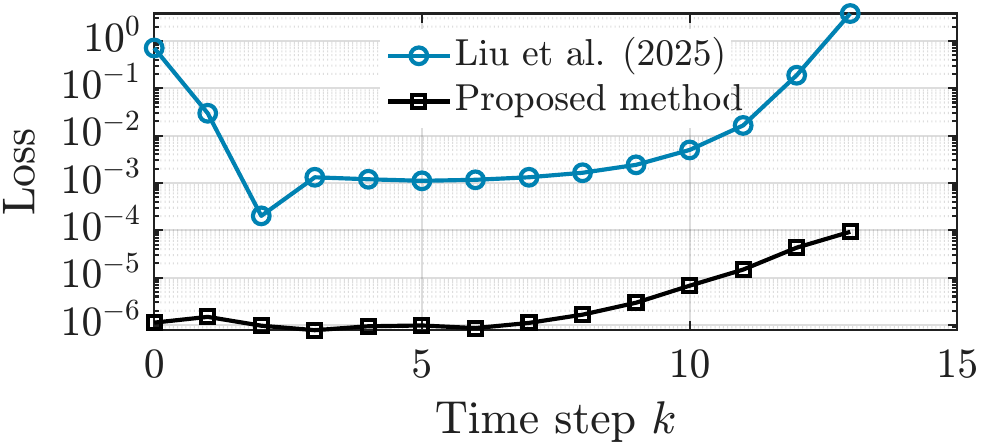}
	\caption{Covariance propagation loss for the spacecraft rendezvous problem}
	\label{fig:example_cwh_covariance_propagation_loss}
\end{figure}

For the approach of \cite{liuOptimalCovarianceSteering2025}, we use $\eta = 10^{-3}$ for the convexifying term (see \cref{remark:convexifying_term}).
For this scenario, MOSEK returns status \texttt{UNKNOWN}; this indicates numerical issues, where the returned solution may or may not be close to a feasible and optimal solution \cite{mosekapsMOSEKOptimizationToolbox2025}. 
Here, the covariance dynamics are largely violated, according to a measure of loss in covariance propagation defined as follows:
\begin{equation}
	\text{Loss}_k = \norm{\phi(K_k, P_k) - P_{k+1}}_F / \norm{P_{k+1}}_F
\end{equation}
where \( K_k, P_k \) are the solution of the optimization, and $\phi(K_k, P_k)$ is the RHS of \cref{eq:full_covariance_propagation}. 
\cref{fig:example_cwh_covariance_propagation_loss} shows the loss for the two methods, showing the superior accuracy of the proposed method's covariance propagation; using tighter convergence tolerance for the proposed method can further improve the accuracy if desired. We also tested with different values of $\eta$ and alternative objective functions for the full-covariance approach, such as the EoQ form, \( \sum_k \norm{v_k}_2 + \lambda_{\max}(Y_k) + \eta \trace(Y_k)\), and \(\sum_k \norm{v_k}_2 + \trace(Y_k) \), all either resulting in numerical issues and/or lossy covariance propagation.
These results support our hypothesis: by working with the square root of the covariance matrix, solvers can work with better-scaled problems and are less likely to encounter numerical issues.
\section{Conclusions}
This paper presents a new solution method for chance-constrained covariance steering. The presented method addresses critical drawbacks in existing solution methods, namely: numerical ill-conditioning and cost formulation inflexibility in the full-covariance method as well as computational complexity in the block-matrix method. The key innovation lies in formulating the problem using the Cholesky factor of the covariance matrix, inspired by the literature on square root filtering. We then show the optimality properties of the proposed method; in the unconstrained case with expectation-of-quadratic cost, the solution is globally optimal, while in general the solution is locally optimal. The proof leverages techniques from manifold topology. Numerical examples support our theoretical claims of optimality and numerical reliability, and demonstrate scalability to large horizon sizes. For chance-constrained problems, the horizon length-scalability and optimality is competitive with the state-of-the-art.



\bibliographystyle{IEEEtran}
\bibliography{references}

\end{document}